\theoremstyle{plain}
\newtheorem{theorem}{Theorem}[section]
\newtheorem{lemma}[theorem]{Lemma}
\newtheorem{corollary}[theorem]{Corollary}
\theoremstyle{definition}
\newtheorem{remark}[theorem]{Remark}
\newtheorem{definition}[theorem]{Definition}
\newtheorem{exa}[theorem]{Example}
\numberwithin{equation}{section}
\def \n #1.#2{\Vert #1\Vert_{#2}}
\definecolor{azul}{rgb}{0.1,0.6,0.86}
\definecolor{bluee}{rgb}{0,0.33,0.55}
\definecolor{naranja}{RGB}{249,153,96}
\def\noi{\noindent}
\def\bdem{\begin{proof}}
\def\edem{\end{proof}}
\def\bm{\left(\begin{array}}
\def\em{\end{array}\right)}
\def\ben{\begin{enumerate}}
\def\een{\end{enumerate}}
\def\barr{\begin{array}}
\def\earr{\end{array}}
 \def\bit{\begin{itemize}}
\def\eit{\end{itemize}}
\def\beq{\begin{equation}}
\def\eeq{\end{equation}}
\def\bdes{\begin{description}}
\def\edes{\end{description}}
\def\fii{\varphi }
\def\R{\mathbb{R}}
\newcommand{\peso}[1]{ \quad \mbox{  #1 } \quad }
\newcommand{\norm}[1]{\left\lVert#1\right\rVert}
\newcommand{\parenthesis}[1]{\left( #1 \right)}
\def\bdem{\begin{proof}}
\def\edem{\end{proof}}
\begin{document}

\title{Revisiting Yano and Zygmund extrapolation theory} 
\author{Elona Agora, Jorge Antezana, Sergi Baena-Miret, Mar\'{\i}a J. Carro} 

\address{E. Agora, Instituto Argentino de Matem\'atica ``Alberto P. Calder\'on'', 
1083 Buenos Aires, Argentina.}
\email{elona.agora@gmail.com}

\address{J. Antezana, Department of Mathematics, 
Faculty of Exact Sciences, 
National University of La Plata, 
1900 La Plata, Argentina,  \and
Instituto Argentino de Matem\'atica ``Alberto P. Calder\'on'', 
1083 Buenos Aires, Argentina.}
\email{antezana@mate.unlp.edu.ar}

\address{Sergi Baena-Miret, Department of Mathematics and Informatic, University of Barce- lona, Barcelona, Spain.}
\email{sergibaena@ub.edu}

\address{M. J. Carro, Department of Mathematical Analysis and Applied Mathematics,
Complu- tense University of Madrid, Madrid, Spain.}
\email{mjcarro@ucm.es}

\subjclass[2010]{46E30, 28A10, 47A30}
\keywords{Yano's Extrapolation theory, Zygmund's extrapolation theory, Calder\'on type operators, decreasing rearrangement estimates}
\thanks{This work has been partially supported by Grants MTM2016-75196-P and MTM2017-83499-P (MINECO / FEDER, UE),  PIP-152 (CONICET), PICT 2015-1505 (ANPCYT), and 11X829 (UNLP)}

\begin{abstract} 
We prove a pointwise estimate for the decreasing rearrangement of $Tf$, where $T$ is any sublinear operator satisfying the weak-type boundedness 
$$
T:L^{p,1}(\mu) \to L^{p,\infty}(\nu), \quad  \forall p: 1<p_0 < p\leq p_1<\infty,
$$
with norm controlled by  $C\varphi\left(\left[{p_0^{-1}} - p^{-1}\right]^{-1}\right)$ and $\varphi$ satisfies some admissibility conditions.
The pointwise estimate is:
\begin{equation*}
\begin{split}
    (Tf)^*_\nu(t) &\lesssim \frac 1{p_0 - 1}\left(\frac 1{t^\frac 1{p_0}}\int_0^t \varphi\left(1 - \log \frac rt\right)f^*_\mu(r)\frac{dr}{r^{1 - \frac 1{p_0}}} + \frac 1{t^\frac 1{p_1}}\int_t^\infty f^*_\mu(r)\frac{dr}{r^{1 - \frac 1{p_1}}}\right).
    \end{split}
\end{equation*}
In particular, this estimate allows to obtain extensions of Yano and Zygmund extrapolation results. 

\end{abstract}

\date{\today}

\maketitle

\pagestyle{headings}\pagenumbering{arabic}\thispagestyle{plain}

\markboth{Revisiting Yano and Zygmund extrapolation theory}
{E. Agora, J. Antezana, S. Baena-Miret and M.J. Carro}

%\tableofcontents

\section{Introduction}

In 1951, Yano (see \cite{y:y,z:z}) using the ideas of Titchmarsh
in \cite{t:t}, proved that for every sublinear operator $T$ satisfying that, for some $\alpha>0$ and for every $1<p\le p_1$,
$$
T:L^p(\mu) \longrightarrow L^p(\nu), \qquad \frac C{(p-1)^\alpha},
$$
where $(\mathcal M, \mu)$ and $(\mathcal N, \nu)$ are two finite measure spaces, it holds  that
$$
T:L(\log L)^\alpha(\mu)
\longrightarrow L^1(\nu), 
$$
where the space $L(\log L)^\alpha(\mu) $ is defined as the set of $\mu$-measurable functions such that $$||f||_{L(\log L)^\alpha(\mu)} = \int_0^\infty f_\mu^*(t)\left(1 + \log^+ \frac 1t\right)^\alpha \, dt < \infty,$$ where $a^+ = \max(a,0)$, for every $a \in \mathbb R$, $f^*_\mu$  is  the decreasing rearrangement of $f$ with respect to the measure $\mu$ defined as
$$
f^*_\mu(t)=\inf\Big\{y>0: \lambda_f^\mu(y)\le  t\Big\}, \qquad t > 0,
$$
and 
$$
\lambda_f^\mu(y)=\mu\big(\big\{x\in\mathbb R^n: |f(x)|>y\big\}\big),\qquad y > 0,
$$ 
is the distribution function of $f$ with respect to $\mu$. (Here we are using the standard notation $\mu(E)=\int_E \, d\mu(x)$ for every $\mu$-measurable set $E \subseteq X$. If $d\mu=dx$, we shall write $f^*$, $\lambda_f(y)$ and  $|E|$. See \cite{bs:bs} for more details about this topic).

If the measures involved are not finite, it was proved in \cite{c:c} and \cite{c1:c1} that
under a weaker condition on the operator $T$, namely
$$
\bigg(\int_{\mathcal N} |T\chi_A(x)|^p\, d\nu(x)\bigg)^{1/p}\le \frac
C{p-1}\mu(A)^{1/p},
$$
for every $\mu$-measurable set $A\subset\mathcal M$ and every $1<p\le p_0$, with $C$
independent of $A$ and
$p$, then
$$
T:L(\log L)^\alpha(\mu) \longrightarrow M(\phi),
$$
where  $M(\phi)$ is the maximal Lorentz space associated
to the function
$\phi(t)=\frac t{1+\log^+ t}$, $t > 0$; that is,
$$
\n f.{M(\phi)}=\sup_{t>0}\phi(t)f^{**}_{\nu}(t) = \sup_{t>1}\frac {t f^{**}_{\nu}(t)}{1+\log t},
$$
where $f^{**}_\nu(t) = \frac 1t\int_0^t f^*_\nu(s)\,ds$, $t > 0$. These results belong to what is known as Yano's extrapolation theory. 

On the other hand, in \cite[p.  119]{z:z} it was seen that if $T$ is a linear
operator satisfying
\begin{equation}
\n {Tf}.{L^p(\nu)} \le C p \n {f}.{L^p(\mu)},\label {zz}
\end{equation}
for  every $p$ near $\infty$ and for $\mu$ and $\nu$ being finite measures,
then
$$
T:L^\infty(\mu)\longrightarrow L_{\exp}(\nu), 
$$
where $L_{\exp}(\nu)$ is the set of $\nu$-measurable functions satisfying that $$||f||_{L_{\exp}(\nu)} = \sup_{0 < t < 1} \frac{f_\nu^{**}(t)}{1 + \log \frac 1t},$$
and this result was also extended to the case of general measures (see \cite{c1:c1}) proving that, if $T$ is a linear operator satisfying
(\ref {zz}), then
\begin{equation}\label{zzz}
\sup_{0 < t < 1} \frac{ (Tf)^{**}_{\nu}(t) }{ 1+\log\frac 1t}
\lesssim ||f||_{L^\infty(\mu)} + \int_1^\infty f^{**}_{\mu}(s) \frac {ds}s,
\end{equation}
where,  as usual, we write $A \lesssim B$ if there exists a positive constant $C>0$, independent of $A$ and $B$, such that $A\leq C B$. If $A\lesssim B$ and $B\lesssim A$, then we write $A\approx B$. 
\noindent These results belong to what is known as Zygmunds's extrapolation theory. 

\

For the purpose of this work, it is also interesting to mention that analogue results are known to the ones commented above in the case that, for some $1 < p_0<p_1 < \infty$ and every $p_0<p<p_1$,
$$
T:L^p(\mu) \longrightarrow L^p(\nu)
$$
with
$$
||T||_{L^p(\mu) \longrightarrow L^p(\nu)}\lesssim  \frac 1{(p-p_0)^\alpha} \quad\mbox{or}\quad  ||T||_{L^p(\mu) \longrightarrow L^p(\nu)} \lesssim\frac 1{(p_1-p)^\alpha}, \qquad \alpha > 0.
$$

The exact statements are the following:

\begin{theorem} (\cite{cm:cm}) 
Let $T$ be a sublinear operator satisfying that for every $p_0 < p \leq p_1$,
$$
T:L^{p}(\mu)\longrightarrow L^{p}(\nu), \qquad \frac C{p-p_0}.
$$
Then,
\begin{eqnarray*}
%$\lefteqn{
\sup_{t>1} 
\frac{\Big(\displaystyle
\int_0^{t} (Tf)^*_{\nu}(s)^{p_0}\,ds\Big)^{1/p_0}}
{1+\log t}\lesssim
%\\
%\noalign{\smallskip}
%& &
\|f\|_{L^{p_0}(\mu)}+\int_0^1
\frac{\Big(\displaystyle
\int_0^r f^*_{\mu}(s) ^{p_0}\,ds\Big)^{1/p_0}}r 
\,dr.
\end{eqnarray*}
\end{theorem}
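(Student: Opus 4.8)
The plan is to derive the estimate as a corollary of the master pointwise rearrangement inequality announced in the abstract, specialized to $\varphi \equiv 1$ (the constant function, which certainly satisfies the admissibility conditions) and to the critical range $p_0 < p \leq p_1$. First I would observe that the hypothesis $\|T\|_{L^p(\mu)\to L^p(\nu)}\le C/(p-p_0)$ is stronger than the hypothesis of the master estimate: by the inclusions $L^{p,1}(\mu)\hookrightarrow L^p(\mu)$ and $L^p(\nu)\hookrightarrow L^{p,\infty}(\nu)$, $T$ maps $L^{p,1}(\mu)\to L^{p,\infty}(\nu)$ with norm $\lesssim C/(p-p_0) = C\,[\,p_0^{-1}-p^{-1}\,]^{-1}\cdot\frac{p-p_0}{p\,p_0(p-p_0)^{-1}}$... more cleanly, since $p_0^{-1}-p^{-1} = (p-p_0)/(pp_0)$ and $p$ ranges over a bounded interval, one has $C/(p-p_0) \approx C\,[\,p_0^{-1}-p^{-1}\,]^{-1}$ up to constants depending only on $p_0,p_1$. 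Hence the admissible-growth hypothesis holds with $\varphi\equiv 1$, and the master pointwise estimate gives
\begin{equation*}
(Tf)^*_\nu(t) \lesssim \frac{1}{p_0-1}\left(\frac{1}{t^{1/p_0}}\int_0^t f^*_\mu(r)\,\frac{dr}{r^{1-1/p_0}} + \frac{1}{t^{1/p_1}}\int_t^\infty f^*_\mu(r)\,\frac{dr}{r^{1-1/p_1}}\right).
\end{equation*}

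Next I would raise both sides to the power $p_0$, integrate in $t$ over $(0,s)$, and take the $p_0$-th root, using the triangle inequality in $L^{p_0}(0,s)$ to split off the two terms. The second (high-$r$) term is harmless: a Minkowski-integral-inequality (Hardy-type) argument bounds $\Bigl(\int_0^s \bigl(t^{-1/p_1}\int_t^\infty f^*_\mu(r) r^{1/p_1 - 1}dr\bigr)^{p_0}dt\Bigr)^{1/p_0}$ by a constant times $\|f\|_{L^{p_0}(\mu)}$ when $s>1$ — this is the standard dual-Hardy estimate exploiting $p_1>p_0$ — plus, if needed, a tail already absorbed into the norm. For the first term, I would apply the classical Hardy inequality for the operator $g\mapsto t^{-1/p_0}\int_0^t g(r)\,r^{1/p_0-1}dr$ on $L^{p_0}$, which is bounded (the weight exponents are exactly critical, so one must be slightly careful, but after the outer integration $\int_0^s(\cdot)^{p_0}\frac{dt}{t}$-type manipulation and Fubini the bound emerges with the logarithmic factor $1+\log t$ appearing precisely because the Hardy kernel at the critical exponent produces a logarithm). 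Dividing by $1+\log t$ and taking the supremum over $t>1$ then yields the claimed inequality.

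Concretely, the key steps in order are: (1) verify $\varphi\equiv 1$ is admissible and that $C/(p-p_0)\approx C[p_0^{-1}-p^{-1}]^{-1}$ on the relevant $p$-interval, so the master estimate applies; (2) insert $\varphi\equiv 1$ into the pointwise bound; (3) raise to the $p_0$ power, integrate over $(0,t)$, and use Minkowski's integral inequality to separate the local and tail parts; (4) handle the tail part via the dual Hardy inequality (valid since $p_1>p_0$), absorbing it into $\|f\|_{L^{p_0}(\mu)}$ together with $\int_0^1 r^{-1}\bigl(\int_0^r f^*_\mu(s)^{p_0}ds\bigr)^{1/p_0}dr$; (5) handle the local part via the critical Hardy inequality, which after a Fubini interchange and the substitution tracking the logarithmic weight produces exactly $\int_0^1 r^{-1}\bigl(\int_0^r f^*_\mu(s)^{p_0}ds\bigr)^{1/p_0}dr$ up to the $1+\log t$ normalization; (6) divide by $1+\log t$ and take $\sup_{t>1}$.

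The main obstacle I anticipate is step (5): because the Hardy kernel $t^{-1/p_0}r^{1/p_0-1}$ sits exactly at the endpoint where the ordinary weighted Hardy inequality fails, one cannot simply quote boundedness on $L^{p_0}(0,\infty)$ — the logarithmic loss is genuine and must be produced, not avoided. The right bookkeeping is to freeze the outer variable $t$, write $\int_0^t f^*_\mu(r)r^{1/p_0-1}dr$, split the $r$-integral dyadically (or split at $r=1$), and on each piece use $\|f\|_{L^{p_0}(\mu)}$-type control on $\int_0^r f^*_\mu(s)^{p_0}ds$; summing the dyadic contributions up to level $\log t$ is what manufactures the factor $1+\log t$. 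Getting the constants to collapse cleanly — and in particular checking that the term $\int_0^1 \frac{1}{r}\bigl(\int_0^r f^*_\mu(s)^{p_0}ds\bigr)^{1/p_0}dr$ on the right-hand side genuinely dominates the near-origin contribution while $\|f\|_{L^{p_0}(\mu)}$ absorbs the rest — is the delicate part, but it is a finite computation once the dyadic decomposition is set up.
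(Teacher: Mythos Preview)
First, note that this theorem is not proved in the paper at all: it is quoted in the introduction as a known result from \cite{cm:cm}, used only as motivation. There is therefore no ``paper's own proof'' to compare against; your proposal is an attempt to rederive a cited result from the paper's new machinery.

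That said, your argument contains a genuine error at step (1). You correctly observe that $C/(p-p_0)\approx C\bigl[p_0^{-1}-p^{-1}\bigr]^{-1}$ on the bounded interval $(p_0,p_1]$, but then conclude that the master hypothesis holds with $\varphi\equiv 1$. This is backwards: writing the norm bound as $C\,\varphi\bigl([p_0^{-1}-p^{-1}]^{-1}\bigr)$ forces $\varphi(x)\approx x$, not $\varphi\equiv 1$. (Separately, $\varphi\equiv 1$ is not admissible in the sense of Definition~\ref{admissible}: condition (b) requires $\gamma/x\le \varphi'(x)/\varphi(x)$ for some $\gamma>0$, which fails when $\varphi'\equiv 0$.) With the wrong $\varphi$ your pointwise bound collapses to the classical Calder\'on estimate of Theorem~\ref{classical}, which would imply $T:L^{p_0,1}\to L^{p_0,\infty}$ uniformly --- something the hypothesis of the theorem certainly does not give, since the operator norm blows up as $p\to p_0^+$.

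If you repair this and take $\varphi(x)=x$, the master estimate inserts an extra factor $\bigl(1+\log(t/r)\bigr)$ into the local term $P_{p_0,\varphi}$. Your steps (3)--(6) are written for the kernel without that factor, so the ``critical Hardy'' computation in step (5) must be redone; the additional logarithm interacts with the already-critical Hardy exponent and it is not clear that one recovers exactly the right-hand side of the Carro--Mart\'in inequality rather than a version with one extra logarithmic loss. In short, the strategy of deducing the theorem from Theorem~\ref{thrm:main_result1} is plausible but the execution as written fails at the first step, and the repaired version requires a more careful analysis than you have sketched.
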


\begin{theorem} (\cite{cm:cm}) 
Let $T$ be a sublinear operator satisfying that for every $p_0\le p <p_1$, 
$$
T:L^{p}(\mu)\longrightarrow L^{p}(\nu), \qquad \frac C{p_1-p}. 
$$
 Then,
\begin{eqnarray*}
%$\lefteqn{
\sup_{0 < t < 1}  
\frac{\Big(\displaystyle
\int_{t}^\infty  (Tf)^*_\nu(s) ^{p_1}\,ds\Big)^{1/p_1}}
{1+\log\frac 1t}\lesssim
%\\
%\noalign{\smallskip}
%& &
\|f\|_{L^{p_1}(\mu)}+\int_1^\infty
\frac{\Big(\displaystyle
\int_r^{\infty}  f^*_\mu(s) ^{p_1}\,ds\Big)^{1/p_1}}r
\,dr.
\end{eqnarray*}
\end{theorem}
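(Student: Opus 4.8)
The plan is to deduce the theorem from a pointwise rearrangement estimate for $Tf$ --- the analogue, with the roles of $p_{0}$ and $p_{1}$ (and of $0$ and $\infty$) interchanged, of the pointwise estimate stated in the abstract --- and then to integrate that estimate against a one-dimensional Hardy operator. The operator $T$ enters only through the elementary Chebyshev consequence of the hypothesis: for every $p\in[p_{0},p_{1})$ and every $\mu$-measurable $g$,
$$
(Tg)^{*}_{\nu}(s)\;\le\;\Big(\frac1s\int_{0}^{s}(Tg)^{*}_{\nu}(w)^{p}\,dw\Big)^{1/p}\;\le\;\frac{C}{(p_{1}-p)\,s^{1/p}}\,\|g\|_{L^{p}(\mu)},\qquad s>0,
$$
the first step by monotonicity of $(Tg)^{*}_{\nu}$ and the second by the assumed norm bound.

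The heart of the argument is to upgrade this uniform family into a single pointwise bound of Calder\'on type,
$$
(Tf)^{*}_{\nu}(s)\;\lesssim\;\frac{1}{s^{1/p_{0}}}\int_{0}^{s}f^{*}_{\mu}(v)\,\frac{dv}{v^{\,1-1/p_{0}}}\;+\;\frac{1}{s^{1/p_{1}}}\int_{s}^{\infty}\Big(1+\log\tfrac vs\Big)\,f^{*}_{\mu}(v)\,\frac{dv}{v^{\,1-1/p_{1}}},\qquad s>0.
$$
The logarithmic weight in the second term is what the blow-up of the norms at $p_{1}$ produces: for a piece of $f$ concentrated at one amplitude/measure scale, one optimizes the displayed Chebyshev inequality over $p\in[p_{0},p_{1})$, and $\inf_{p_{0}\le p<p_{1}}(p_{1}-p)^{-1}R^{1/p}\approx(1+\log^{+}R)\,R^{1/p_{1}}$ for $R\ge1$. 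To obtain the full estimate I would decompose $f$ into its amplitude layers, bound the $T$-image of each layer with the exponent that is optimal at that scale --- the fixed endpoint exponent $p_{0}$ on the layers sitting at measure scale $\le s$ (these recombine to the first term) and exponents tending to $p_{1}$ on the layers at larger scale (these recombine to the second, logarithmically weighted, term) --- and then glue the pieces together through their distribution functions by a careful balancing, rather than through the crude bound $(Tf)^{*}_{\nu}(\sum t_{j})\le\sum(Tf_{j})^{*}_{\nu}(t_{j})$, which would force the masses $t_{j}$ to $0$ and make the factors $(2^{j}/t_{j})^{1/p_{j}}$ diverge. The genuinely delicate point is the contribution of the deep tail of $f$: no $L^{p_{1}}$ bound on $T$ is available, so one has to exploit the whole range $p<p_{1}$ and the extra room coming from $1+\log^{+}R\ge1$.

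With the pointwise estimate in hand, write $I(s)$ and $II(s)$ for its two summands. For $0<t<1$ one has $\big(\int_{t}^{\infty}(Tf)^{*}_{\nu}(s)^{p_{1}}\,ds\big)^{1/p_{1}}\le\big(\int_{t}^{\infty}I^{p_{1}}\big)^{1/p_{1}}+\big(\int_{t}^{\infty}II^{p_{1}}\big)^{1/p_{1}}$. For the first summand a weighted Hardy inequality --- valid already on all of $(0,\infty)$ because the two weights are in balance (the exponent of the variable in the Muckenhoupt--Bradley test is exactly $0$) --- gives $\big(\int_{0}^{\infty}I(s)^{p_{1}}\,ds\big)^{1/p_{1}}\lesssim\|f\|_{L^{p_{1}}(\mu)}$. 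For the second summand I would write $\log\tfrac vs=\int_{s}^{v}\tfrac{dr}{r}$ and use Fubini to peel off the non-logarithmic part, and then apply (iterated) weighted Hardy inequalities on the half-line $(t,\infty)$: the measure $\tfrac{ds}{s}$ coming from $(s^{-1/p_{1}})^{p_{1}}$ is responsible for the multiplicative factor $1+\log\tfrac1t$ (because $\int_{t}^{1}\tfrac{ds}{s}=\log\tfrac1t$), while the monotonicity of $f^{*}_{\mu}$ and the kernel $1+\log\tfrac vs$ produce precisely the two quantities $\|f\|_{L^{p_{1}}(\mu)}$ and $\int_{1}^{\infty}\big(\int_{r}^{\infty}f^{*}_{\mu}(s)^{p_{1}}\,ds\big)^{1/p_{1}}\tfrac{dr}{r}$ --- the latter being the sharp weighted expression that dominates $\|f^{*}_{\mu}(\cdot)\,\log^{+}(\cdot)\|_{L^{p_{1}}}$ but is not dominated by it. Dividing by $1+\log\tfrac1t$ and taking the supremum over $0<t<1$ completes the proof.

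The main obstacle is the pointwise estimate, and inside it the recombination of the amplitude layers: this has to be organized so that neither the penalties $1/(p_{1}-p)$ nor the interpolation factors $(2^{j}/t_{j})^{1/p}$ nor the passage from a $p$-power back to a $p_{1}$-power produces a divergent loss, and so that the deep tail of $f$ --- the range where the hypothesis gives no information at $p_{1}$ --- is controlled using only the admissible exponents. Everything after the pointwise estimate is routine one-dimensional Hardy analysis; the only point requiring attention there is to keep the power of $1+\log\tfrac1t$ equal to $1$, which is why one must integrate the log-kernel honestly rather than estimate $1+\log\tfrac vs\le(1+\log\tfrac1t)(1+\log^{+}v)$, a bound that would cost an extra power $1/p_{1}$.
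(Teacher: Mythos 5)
Your central observation is correct: the theorem should follow from a pointwise Calder\'on-type rearrangement estimate with a logarithmic weight on the $Q_{p_1}$ side, and the heuristic $\inf_{p_0\le p<p_1}(p_1-p)^{-1}R^{1/p}\approx(1+\log^+R)R^{1/p_1}$ is the right reason for the logarithm. But there is a genuine gap: you state the pointwise estimate, declare it ``the heart of the argument'' and ``the main obstacle,'' sketch an amplitude-layer decomposition, and then leave the recombination --- the choice of the masses $t_j$, the exponents $p_j$, and the distribution-function gluing that must absorb the factors $1/(p_1-p_j)$ and $(2^j/t_j)^{1/p_j}$ --- entirely unspecified. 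You even flag the deep tail as ``genuinely delicate'' without resolving it. Naming the difficulty is not the same as overcoming it; as written, the proposal is an outline whose load-bearing step is missing.

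It is also worth pointing out that the estimate you are trying to construct is already on the page. Your hypothesis $\|T\|_{L^p(\mu)\to L^p(\nu)}\le C/(p_1-p)$ for $p_0\le p<p_1$ implies $\|T\|_{L^{p,1}(\mu)\to L^{p,\infty}(\nu)}\lesssim\varphi\big([1/p-1/p_1]^{-1}\big)$ with the admissible function $\varphi(x)=x$, since $[1/p-1/p_1]^{-1}=pp_1/(p_1-p)\approx 1/(p_1-p)$ on the bounded range $[p_0,p_1)$. Theorem~\ref{paper32}(i) then gives precisely
$$
(Tf)^*_\nu(t)\lesssim\frac1{t^{1/p_0}}\int_0^t f^*_\mu(s)\frac{ds}{s^{1-1/p_0}}+\frac1{t^{1/p_1}}\int_t^\infty\Big(1+\log\tfrac st\Big)f^*_\mu(s)\frac{ds}{s^{1-1/p_1}},
$$
which is your $I(t)+II(t)$. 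Moreover the method behind Theorem~\ref{paper32} (and the paper's own Theorem~\ref{thrm:main_result1}) is deliberately \emph{not} an amplitude-layer decomposition: as the introduction emphasizes, those results are obtained by cutting $f$ into just two pieces, $g=(f-f^*_\mu(t))\chi_{\{f>f^*_\mu(t)\}}$ and $h=f-g$, following the proof of the classical Calder\'on estimate (Theorem~\ref{classical}), precisely to avoid the delicate bookkeeping of the dyadic approach. So even if you were to complete the amplitude-layer argument --- which is essentially the strategy of \cite{cm:cm} and is known to work --- you would be re-deriving a cited result by a harder route.

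Finally, the post-pointwise Hardy analysis is also left as a sketch. The $I$ part is fine (the Muckenhoupt exponent is indeed $0$, so $P_{p_0}:L^{p_1}(ds)\to L^{p_1}(ds)$), but for $II$ the claim that peeling off $\log\frac vs=\int_s^v\frac{dr}{r}$ and applying ``iterated weighted Hardy inequalities'' produces exactly $(1+\log\frac1t)\big(\|f\|_{L^{p_1}(\mu)}+\int_1^\infty\big(\int_r^\infty f^*_\mu(s)^{p_1}ds\big)^{1/p_1}\frac{dr}{r}\big)$ needs to be verified: $Q_{p_1}$ is critical on $L^{p_1}$, so the bound is not a single off-the-shelf Hardy inequality, and some care is required to land on the second (tail) functional rather than on a weaker quantity. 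This part is plausibly routine, but it is not done, and combined with the missing construction of the pointwise estimate the proposal cannot be accepted as a proof.
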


\ 

\medskip
There is also a Yano's extrapolation theorem concerning weak-type spaces. In 1996, N.Yu. Antonov \cite{a:a2} proved that there is almost everywhere convergence for the
Fourier series of every function in $L\log L\log_3 L(\mathbb T)$, where $\mathbb T$ represents the unit circle and, for an arbitrary measure $\mu$,  $$||f||_{L\log L\log_3 L(\mu)} = \int_0^\infty f^*_\mu(t) \log_1 \frac 1t \log_3 \frac 1t\, dt < \infty,$$ with \begin{equation}\label{eq:log_def}
    \log_1 t = 1 + \log^+ t \quad \text{ and } \quad \log_k t = \log_1 \log_{k - 1} t \, \, \text{ for } k > 1, \qquad \qquad t > 0.
\end{equation} Indeed, even though he did not write it explicitly, behind his ideas there is an extrapolation argument (see \cite{a:a,c:c} for more details). Before we make its statement precise, let us recall that, given $1 \leq p < \infty$ and $0 < q < \infty$, the Lorentz spaces $L^{p,q}(\mu)$ are defined as the set of $\mu$-measurable functions $f$ such that $$\Vert f\Vert _{L^{p,q}(\mu)}= \left(\int_0^\infty t^{\frac qp -1} f^*_\mu(t)^q \, dt\right)^{1/q}  =  \left(p \int_0^\infty t^{q-1} \lambda_f^\mu(y)^\frac qp\,dy\right)^{1/q}  <\infty,$$ and $$
\Vert f\Vert _{L^{p,\infty}(\mu)}= \sup_{t>0} t^{\frac 1p} f^*_\mu(t) =\sup_{y>0} y \lambda_{f}^\mu(y)^\frac 1p <\infty.$$

\begin{theorem}\label{thrm:restricted_weak_type_Yano}   If $T$ is a sublinear operator such that for some $\alpha > 0$ and for every $1 < p \leq p_0$, 
$$
T:L^{p,1}(\mu) \rightarrow L^{p,\infty}(\nu), \qquad \frac C{(p - 1)^\alpha},
$$
then 
$$
T:L(\log L )^\alpha\log_3 L(\mu) \rightarrow L^{1,\infty}(\nu).
$$ 
\end{theorem}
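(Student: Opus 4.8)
The plan is to derive Theorem~\ref{thrm:restricted_weak_type_Yano} as a corollary of the main pointwise rearrangement estimate announced in the abstract, specialized to the situation at hand. First I would record the relevant instance of that estimate: here the hypothesis is $T\colon L^{p,1}(\mu)\to L^{p,\infty}(\nu)$ for $1<p\le p_0$ with norm $\le C(p-1)^{-\alpha}$, which fits the general scheme with $p_0$ replaced by $1$, $p_1$ replaced by $p_0$, and $\varphi(s)=s^\alpha$. One must of course check that $\varphi(s)=s^\alpha$ satisfies the admissibility conditions (this is the sort of thing that should be a short verification once those conditions are written out in the body of the paper). The pointwise estimate then reads, for $0<t<\infty$,
\begin{equation*}
(Tf)^*_\nu(t)\lesssim \int_0^t \left(1-\log\tfrac rt\right)^\alpha f^*_\mu(r)\,\frac{dr}{r} + \frac1t\int_t^\infty f^*_\mu(r)\,dr,
\end{equation*}
after letting the lower index tend to $1$ (so the $t^{-1/p_0}$ weight becomes $t^{0}$ and the integration measure $r^{-1+1/p_0}dr$ becomes $r^{-1}\,dr$; the second term behaves as a Hardy-type average). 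The factor $1/(p_0-1)$ in the general statement is an absolute constant here.

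Next I would estimate the $L^{1,\infty}(\nu)$ quasinorm of $Tf$ using this pointwise bound. Since $\Vert Tf\Vert_{L^{1,\infty}(\nu)}=\sup_{t>0}t\,(Tf)^*_\nu(t)$, it suffices to bound
\begin{equation*}
\sup_{t>0}\; t\int_0^t\left(1-\log\tfrac rt\right)^\alpha f^*_\mu(r)\,\frac{dr}{r}
\qquad\text{and}\qquad
\sup_{t>0}\;\int_t^\infty f^*_\mu(r)\,dr
\end{equation*}
by a constant multiple of $\Vert f\Vert_{L(\log L)^\alpha\log_3 L(\mu)}$. The second supremum is immediate: $\int_t^\infty f^*_\mu(r)\,dr\le \int_0^\infty f^*_\mu(r)\,dr\le \Vert f\Vert_{L(\log L)^\alpha\log_3 L(\mu)}$ since $\log_1\frac1r\log_3\frac1r\ge 1$ for all $r>0$. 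For the first term, the standard device is to split the inner integral at $r=t/e$ (say), or more conveniently to observe that one only needs the bound for small $t$, since for $t$ bounded below the factor $t$ together with the integrability of $f^*_\mu$ near $0$ controls everything; then on $0<r<t$ one compares $t\,(1-\log\frac rt)^\alpha\cdot\frac1r$ against the weight $\log_1\frac1r\log_3\frac1r$ appearing in the norm. Writing $r=t u$ with $u\in(0,1)$ turns the quantity into $\int_0^1(1-\log u)^\alpha f^*_\mu(tu)\,\frac{du}{u}$; one splits according to whether $tu$ is large or small and uses that $(1-\log u)^\alpha\lesssim (1+\log^+\frac1{tu})^\alpha\,(1+\log^+\frac1u)^\alpha$ type submultiplicativity, absorbing the $u$-integral $\int_0^1(1+\log^+\frac1u)^\alpha\frac{du}{u\,\log_3\frac1u}$—which converges precisely because of the extra $\log_3$ factor—and leaving $\sup\cdot$ over $f^*_\mu(tu)\log_1\frac1{tu}\log_3\frac1{tu}$, hence $\Vert f\Vert_{L(\log L)^\alpha\log_3 L(\mu)}$.

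The main obstacle, as the last sentence already hints, is the bookkeeping in this final weighted Hardy estimate: one must see exactly why the iterated logarithm $\log_3$ in the target space is the precise correction that makes the auxiliary $u$-integral converge, and handle the matching of the two logarithmic factors $\log_1\frac1{tu}\log_3\frac1{tu}$ (argument $tu$) with what the pointwise bound produces (a single power $(1-\log u)^\alpha=(1-\log\frac rt)^\alpha$, with argument the ratio $u=r/t$). This is a routine but delicate Schur/Calder\'on-operator type computation; once the general pointwise estimate is in hand it is the only real content of the proof, the rest being substitution of parameters and taking a limit in the lower exponent.
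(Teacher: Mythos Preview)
Your plan has a genuine gap: the paper does not prove Theorem~\ref{thrm:restricted_weak_type_Yano} at all. It is stated in the introduction as a known result (attributed to Antonov and the subsequent extrapolation reformulations of Arias-de-Reyna and Carro), and is used only as a benchmark. More importantly, your proposed route---specializing the main pointwise estimate to lower endpoint $1$---is precisely what the authors point out \emph{cannot} be done. Theorem~\ref{thrm:main_result1} is stated for $1<p_0<p_1<\infty$, and the estimate carries the prefactor $\tfrac{1}{p_0-1}$, which blows up as $p_0\to 1^+$. The remark following the corollary in Section~\ref{sec:main} makes this explicit: redoing the argument at $p_0=1$ yields an extra logarithmic factor in the pointwise bound,
\[
(Tf)^*_\nu(t)\lesssim \frac{1}{t}\int_0^t\Big(1-\log\tfrac rt\Big)\,\fii\Big(1-\log\tfrac rt\Big)f^*_\mu(r)\,dr+\frac{1}{t^{1/p_1}}\int_t^\infty f^*_\mu(r)\,\frac{dr}{r^{1-1/p_1}},
\]
and with $\fii(x)=x^\alpha$ this only gives $T:L(\log L)^{\alpha+1}(\mu)\to L^{1,\infty}(\nu)$, strictly weaker than the $L(\log L)^\alpha\log_3 L$ conclusion. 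The authors then label the sharp $p_0=1$ case an open question.

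There is also a computational slip in your setup: with $p_0=1$ one has $t^{-1/p_0}=t^{-1}$ and $r^{-1+1/p_0}\,dr=dr$, not $t^0$ and $r^{-1}\,dr$ as you wrote. With your (swapped) formula the term $t\int_0^t(1-\log\tfrac rt)^\alpha f^*_\mu(r)\,\tfrac{dr}{r}$ is already infinite for $f=\chi_E$ with $\mu(E)<\infty$ once $t>\mu(E)$, so the estimate you planned to work from is not even usable. In short, the $\log_3$ gain in Theorem~\ref{thrm:restricted_weak_type_Yano} comes from a different argument (the Antonov decomposition into dyadic level sets, not the two-piece Calder\'on splitting), and the paper's machinery does not recover it.
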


Since all the spaces above mentioned are rearrangement invariant, all the results could be also obtained if we find a good estimate for the function $(Tf)^*$ and, moreover, in this case, we can deduce  boundedness of $T$ in other rearrangement invariant spaces as well. At this point, we should recall  the following  well-known pointwise estimate: 

\begin{theorem}{\upshape (\cite[Ch. 4.4 Theorem 4.11]{bs:bs})}\label{classical}
Let $1 \leq p_0 < p_1 < \infty$. A sublinear operator $T$ satisfies that 
$$
T:L^{p_0,1}(\mu) \rightarrow L^{p_0,\infty} (\nu)\qquad \text{ and } \qquad T:L^{p_1,1}(\mu) \rightarrow L^{p_1,\infty}(\nu),
$$ if and only if, for every $t > 0$ and every $\mu$-measurable function $f$,
$$(Tf)^*_\nu(t) \lesssim \frac 1{t^\frac 1{p_0}} \int_0^t f^*_\mu(s)\,\frac{ds}{s^{1 - \frac 1 {p_0}}} + \frac 1{t^\frac 1{p_1}} \int_t^\infty f^*_\mu(s) \frac{ds}{s^{1 - \frac 1 {p_1}}}.$$
\end{theorem}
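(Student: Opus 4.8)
The plan is to prove the two implications of Theorem~\ref{classical} separately, using the standard machinery of weak-type decompositions and the characterization of $L^{p,1}$ and $L^{p,\infty}$ in terms of decreasing rearrangements. For the forward direction (boundedness $\Rightarrow$ pointwise estimate), fix $f$ and $t>0$, and split $f=f_0+f_1$ according to a truncation at level $f_\mu^*(t)$: set $f_0 = f\,\chi_{\{|f|\le f_\mu^*(t)\}}$ (the ``good'' part, controlled in the higher-integrability space $L^{p_1,1}$) and $f_1 = f\,\chi_{\{|f| > f_\mu^*(t)\}}$ (the ``bad'' part, which lives in $L^{p_0,1}$ since it is supported on a set of $\mu$-measure at most $t$). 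By sublinearity, $(Tf)^*_\nu(2t) \le (Tf_0)^*_\nu(t) + (Tf_1)^*_\nu(t)$. Now I would estimate each term: since $T:L^{p_1,1}(\mu)\to L^{p_1,\infty}(\nu)$, we get $(Tf_0)^*_\nu(t) \lesssim t^{-1/p_1}\|f_0\|_{L^{p_1,1}(\mu)}$, and the rearrangement of $f_0$ is essentially $f_\mu^*(s)$ for $s\ge t$ and the constant $f_\mu^*(t)$ for $s<t$, so $\|f_0\|_{L^{p_1,1}(\mu)} \approx \int_0^t f_\mu^*(t)\,s^{1/p_1}\frac{ds}{s} + \int_t^\infty f_\mu^*(s)\,s^{1/p_1}\frac{ds}{s} \lesssim \int_t^\infty f_\mu^*(s)\frac{ds}{s^{1-1/p_1}}$, where the boundary term $t^{1/p_1}f_\mu^*(t)$ is absorbed into the integral $\int_t^\infty f_\mu^*(s)\frac{ds}{s^{1-1/p_1}}$ since $f_\mu^*$ is decreasing (comparing $f^*_\mu(t)$ with the average of $f^*_\mu$ over, say, $(t,2t)$). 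Symmetrically, $T:L^{p_0,1}(\mu)\to L^{p_0,\infty}(\nu)$ gives $(Tf_1)^*_\nu(t)\lesssim t^{-1/p_0}\|f_1\|_{L^{p_0,1}(\mu)}$ and $(f_1)^*_\mu$ is supported on $[0,t]$ and bounded by $f_\mu^*(s)$ there, so $\|f_1\|_{L^{p_0,1}(\mu)} \lesssim \int_0^t f_\mu^*(s)\frac{ds}{s^{1-1/p_0}}$; combining and replacing $2t$ by $t$ (harmless up to constants since $f_\mu^*$ is quasi-decreasing, or just reparametrizing) yields the claimed inequality.

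For the converse direction (pointwise estimate $\Rightarrow$ boundedness), it suffices to observe that the right-hand side defines a sublinear operator on rearrangements which is itself bounded $L^{p_0,1}\to L^{p_0,\infty}$ and $L^{p_1,1}\to L^{p_1,\infty}$. Concretely, denoting the right-hand side by $(Sf)(t)$, I would check directly that $\sup_t t^{1/p_0}(Sf)(t) \lesssim \|f\|_{L^{p_0,1}(\mu)}$: the first term contributes $\sup_t \int_0^t f_\mu^*(s)\frac{ds}{s^{1-1/p_0}} = \|f\|_{L^{p_0,1}(\mu)}$ directly, and for the second term one uses $t^{1/p_0}\cdot t^{-1/p_1}\int_t^\infty f_\mu^*(s)\frac{ds}{s^{1-1/p_1}} = t^{1/p_0 - 1/p_1}\int_t^\infty f_\mu^*(s)s^{1/p_1}\frac{ds}{s}$, and since $1/p_0 > 1/p_1$ and $f\in L^{p_0,1}$ forces $f_\mu^*(s)s^{1/p_0}\to 0$ appropriately, this is controlled by $\|f\|_{L^{p_0,1}(\mu)}$ via a one-dimensional Hardy-type inequality (integration by parts or the elementary estimate $\int_t^\infty g(s)s^{a}\frac{ds}{s} \le C t^{a}\sup_{s\ge t}(s^{1/p_0}g(s)\cdot\text{something})$; more cleanly, bound $f_\mu^*(s)\le s^{-1/p_0}\|f\|_{L^{p_0,\infty}}\le s^{-1/p_0}\|f\|_{L^{p_0,1}}$ and integrate, using $1/p_1<1/p_0$ to get convergence of $\int_t^\infty s^{1/p_1-1/p_0-1}ds = C\,t^{1/p_1-1/p_0}$). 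The analogous computation with $p_1$ in place of $p_0$ handles $L^{p_1,1}\to L^{p_1,\infty}$, this time with the \emph{second} term trivial and the first controlled by a dual Hardy inequality using $1/p_1<1/p_0$.

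The main obstacle I anticipate is not any single deep step but rather the careful bookkeeping of the boundary terms and the passage between $L^{p,1}$-norms of truncations and weighted integrals of $f_\mu^*$ — in particular, justifying that the ``spillover'' term $t^{1/p_j}f_\mu^*(t)$ arising from the constant part of the truncated rearrangement is dominated by the corresponding tail or head integral. This is where monotonicity of $f_\mu^*$ must be used correctly, and it is also where one must be slightly careful if $p_0=1$, since then $L^{1,1}=L^1$ and the constant-part contribution $\int_0^t f_\mu^*(t)\,ds = t f_\mu^*(t)$ must still be absorbed into $\int_0^t f_\mu^*(s)\,ds$ — which is fine because $f_\mu^*$ is decreasing so $t f_\mu^*(t)\le \int_0^t f_\mu^*(s)\,ds$. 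A secondary technical point is the dilation $(Tf)^*_\nu(2t)$ versus $(Tf)^*_\nu(t)$: since all target norms are rearrangement-invariant and the right-hand side of the desired estimate is (up to constants) invariant under $t\mapsto 2t$ — again by monotonicity of $f_\mu^*$ — this causes no real difficulty, but it should be noted explicitly. I would structure the write-up as: (i) the truncation and sublinearity step; (ii) the two weak-type estimates applied to the pieces; (iii) the conversion of $L^{p_j,1}$-norms into the weighted integrals with the boundary-term absorption; (iv) the short converse via direct verification of the two endpoint boundedness properties of the Calder\'on-type operator $S$.
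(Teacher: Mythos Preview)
The paper does not actually prove Theorem~\ref{classical}; it is quoted as a known result from Bennett--Sharpley \cite{bs:bs}, so there is no ``paper's own proof'' to compare against. Your approach is the standard one from that reference, and both the decomposition step and the converse via direct boundedness of the Calder\'on operator are correct in outline.

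There is, however, one genuine slip in your forward direction. You claim that the boundary contribution $t^{1/p_1}f^*_\mu(t)$ coming from the constant part of $(f_0)^*_\mu$ on $[0,t)$ can be absorbed into the tail integral $\int_t^\infty f^*_\mu(s)\,s^{1/p_1-1}\,ds$ by ``comparing $f^*_\mu(t)$ with the average of $f^*_\mu$ over $(t,2t)$''. Monotonicity gives the inequality in the \emph{wrong} direction: the average over $(t,2t)$ is at most $f^*_\mu(t)$, not at least. Concretely, if $f^*_\mu=\chi_{(0,a)}$ and $t$ is just below $a$, the tail integral is nearly zero while $t^{1/p_1}f^*_\mu(t)\approx a^{1/p_1}$. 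The fix is immediate but different from what you wrote: after dividing by $t^{1/p_1}$ the boundary piece is simply $f^*_\mu(t)$, and since $f^*_\mu$ is decreasing,
\[
f^*_\mu(t)\le \frac{1}{p_0}\,\frac{1}{t^{1/p_0}}\int_0^t f^*_\mu(s)\,\frac{ds}{s^{1-1/p_0}},
\]
so it is absorbed into the \emph{head} ($p_0$) term of the right-hand side, not the tail. (This is exactly how the analogous step is handled in the paper's proof of Theorem~\ref{thrm:main_result1}, where $f^*_\mu(t)\le P_{p_0,\varphi}(f^*_\mu)(t)$ is used.) With that correction your argument goes through.
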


Moreover, with the goal of finding interesting pointwise estimates for the function $(Tf)^*$ under weaker conditions on $T$,   the following results have been recently proved in \cite{aabc:aabc} (see Definition \ref{admissible} for the notion of admissible function):

\begin{theorem} \label{paper31} Let $T$ be a sublinear operator and $\varphi$ some admissible function. For every $1\le p<\infty$,
$$
T:L^{p,1}(\mu) \rightarrow L^{p, \infty}(\nu), \qquad  C\varphi(p), 
$$
if and only if, for every $t > 0$ and every $\mu$-measurable function $f$,
\begin{align*}
(Tf)^*_\nu(t)&\lesssim \frac 1t \int_0^t f^*_\mu(s) \, ds + \int_t^\infty \left(1 + \log\frac st\right)^{-1}\fii\left(1 + \log\frac st\right)  f^*_\mu(s) \, \frac{ds}s.
\end{align*}
\end{theorem}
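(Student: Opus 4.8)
\medskip
\noindent The statement is an equivalence whose two implications are of quite different nature.

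\smallskip
\noindent\emph{Sufficiency.} Assuming the pointwise bound, fix $1\le p<\infty$ and use $\|Tf\|_{L^{p,\infty}(\nu)}=\sup_{t>0}t^{1/p}(Tf)^*_\nu(t)$. For the first term, monotonicity of $f^*_\mu$ together with $s^{1/p-1}\ge t^{1/p-1}$ on $(0,t)$ gives $t^{1/p-1}\int_0^t f^*_\mu(s)\,ds\le\int_0^t s^{1/p-1}f^*_\mu(s)\,ds\le\|f\|_{L^{p,1}(\mu)}$, which is $\lesssim\varphi(p)\|f\|_{L^{p,1}(\mu)}$ since $\varphi\gtrsim1$. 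For the second term it suffices, by Fubini and the dilation invariance in $t$, to verify the kernel inequality
\[
\Big(\frac ts\Big)^{1/p}\,\frac{\varphi\!\big(1+\log\tfrac st\big)}{1+\log\tfrac st}\;\lesssim\;\varphi(p)\qquad(s\ge t),
\]
i.e.\ $e^{-u/p}\,\varphi(1+u)/(1+u)\lesssim\varphi(p)$ with $u=\log(s/t)\ge0$: when $1+u\le p$ this is immediate from monotonicity of $\varphi$ and $e^{-u/p}/(1+u)\le1$, and when $1+u>p$ it follows from the sub-multiplicativity and moderate-growth clauses of Definition~\ref{admissible} (roughly $\varphi(pv)\lesssim\varphi(p)\varphi(v)$ and $\varphi(v)\lesssim e^{v}$ for $v=(1+u)/p>1$). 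Integrating this against $f^*_\mu$ closes the estimate.

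\smallskip
\noindent\emph{Necessity: reductions.} Since the weak-type hypothesis holds on the \emph{whole} scale $1\le p<\infty$, there are no endpoint obstructions: for each $p>1$, real (Marcinkiewicz) interpolation between $T\colon L^{p_0,1}(\mu)\to L^{p_0,\infty}(\nu)$ and $T\colon L^{p_1,1}(\mu)\to L^{p_1,\infty}(\nu)$ with $1\le p_0$ well below $p$ and $p_1$ well above $p$ produces the strong-type bound $T\colon L^{p}(\mu)\to L^{p}(\nu)$; because the auxiliary exponents are comparable to $p$ the interpolation constant is harmless and the doubling of $\varphi$ yields $\|T\|_{L^p\to L^p}\lesssim\varphi(p)$, while $T\colon L^1(\mu)\to L^{1,\infty}(\nu)$ with constant $\lesssim\varphi(1)$ is kept. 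Now fix $t>0$ and, by density and monotone convergence, take $f\ge0$ bounded with $\mu(\operatorname{supp}f)<\infty$. Put $\ell=f^*_\mu(t)$, split $f=g+h$ with $g=(f-\ell)_+$ (supported on a set of $\mu$-measure $\le t$) and $h=\min(f,\ell)$, and use $(Tf)^*_\nu(t)\le(Tg)^*_\nu(t/2)+(Th)^*_\nu(t/2)$. Weak $(1,1)$ applied to $g$ gives $(Tg)^*_\nu(t/2)\lesssim t^{-1}\varphi(1)\int_0^t f^*_\mu(s)\,ds$, the first term of the claimed inequality; it remains to bound $(Th)^*_\nu(t/2)$, where $h^*_\mu=\min\{f^*_\mu,\ell\}$.

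\smallskip
\noindent\emph{Necessity: the tail.} Decompose $h=\sum_{k\ge1}s_k$ into the dyadic shells $s_k$ lying between the levels $f^*_\mu(2^kt)$ and $f^*_\mu(2^{k-1}t)$; each $s_k$ is supported on a set of $\mu$-measure $\lesssim2^kt$, with $\|s_k\|_\infty\le f^*_\mu(2^{k-1}t)$ and $s_k^*_\mu\le f^*_\mu$. Subadditivity of the decreasing rearrangement gives $(Th)^*_\nu(t/2)\le\sum_{k\ge1}(Ts_k)^*_\nu(\tau_k)$ for any $\tau_k>0$ with $\sum_k\tau_k\le t/2$, and for each $k$ and each $q>1$,
\[
(Ts_k)^*_\nu(\tau_k)\le\tau_k^{-1/q}\,\|Ts_k\|_{L^{q}(\nu)}\;\lesssim\;\varphi(q)\,f^*_\mu\!\big(2^{k-1}t\big)\Big(\frac{2^kt}{\tau_k}\Big)^{1/q},
\]
where it is essential to use the \emph{strong} $L^q$-norm from the first reduction: the $q$-th root removes the spurious factor $q$ that a weak-type estimate on a quasi-characteristic function would carry. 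Letting $t$, the shells, the parameters $\tau_k$ and the exponents run, and performing the substitution $p=1+\log(s/t)$ (so $dp=ds/s$ and $\varphi(p)/p=\varphi(1+\log\tfrac st)/(1+\log\tfrac st)$), the sum over $k$ reassembles $\int_t^\infty(1+\log\tfrac st)^{-1}\varphi(1+\log\tfrac st)f^*_\mu(s)\,\tfrac{ds}s$, the monotonicity and doubling of $\varphi$ being what allow the passage between the discrete values $\varphi(q)$ and the continuous kernel. One finally removes the temporary hypotheses on $f$ by monotone convergence.

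\smallskip
\noindent\emph{The main obstacle} lies entirely in this last step: producing the \emph{sharp} weight, and in particular the factor $(1+\log\tfrac st)^{-1}$. A naive ``one exponent per shell'' estimate is off by a factor of order $1+\log\tfrac st$ on each dyadic block, so the joint choice of the splitting budget $\{\tau_k\}$ and of the exponents $q=q_k$ must be carried out delicately; in effect one must exploit that the \emph{entire} family of bounds is simultaneously available for each shell, not just one of its members, and it is precisely here that the quantitative content of the admissibility conditions on $\varphi$ is indispensable. Once this balancing is done, the rest is routine bookkeeping.
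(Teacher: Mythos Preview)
The sufficiency direction (pointwise bound $\Rightarrow$ weak-type bounds) is fine: your kernel inequality $e^{-u/p}\varphi(1+u)/(1+u)\lesssim\varphi(p)$ is exactly what is needed and follows from admissibility as you indicate.

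The necessity direction, however, has a genuine gap. With the dyadic shells $s_k$ as you describe them (support of $\mu$-measure $\lesssim 2^kt$, $\|s_k\|_\infty\le f^*_\mu(2^{k-1}t)$), any choice of budget $\tau_k$ with $\sum\tau_k\le t/2$ and exponents $q_k$ yields, after optimizing in $q_k$ via the infimum trick, a bound of order $\sum_k \varphi(k)\,f^*_\mu(2^{k-1}t)$, whereas the target integral discretizes to $\sum_k \tfrac{\varphi(k)}{k}\,f^*_\mu(2^kt)$. These are \emph{not} comparable for general decreasing sequences, so the ``delicate joint choice'' you invoke cannot close the gap as stated. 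What is actually missing is that one must use the sharper bound $\|s_k\|_\infty\le f^*_\mu(2^{k-1}t)-f^*_\mu(2^{k}t)$ together with an Abel summation, at which point the admissibility condition $\varphi'(x)/\varphi(x)\approx 1/x$ converts $\sum_k\varphi(k)\big(f^*_\mu(2^{k-1}t)-f^*_\mu(2^kt)\big)$ into $\sum_k\tfrac{\varphi(k)}{k}f^*_\mu(2^kt)$ plus a harmless $f^*_\mu(t)$. Your sketch neither records the difference bound nor the summation by parts; the phrase ``the quantitative content of the admissibility conditions is indispensable'' is correct in spirit but the mechanism is absent. A secondary issue is the Marcinkiewicz upgrade: for $q$ close to $1$ the interpolation constant carries a factor $(q-1)^{-1}$, so $\|T\|_{L^q\to L^q}\lesssim\varphi(q)$ is only available for $q$ bounded away from $1$; this is harmless once the first shell is absorbed into $f^{**}_\mu(t)$, but it should be said.

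For comparison, this theorem is quoted here from \cite{aabc:aabc} without proof, but the method is visible in the proof of Theorem~\ref{thrm:main_result1}: one first proves the pointwise bound for characteristic functions by taking the infimum over $p$ directly (Lemma~\ref{L infimo log log}), then passes to simple functions using subadditivity of $(\cdot)^{**}$ and \emph{linearity} of the Calder\'on-type operator $R$ --- so that the pieces reassemble exactly, with no loss --- and finally performs a single two-piece split $f=g+h$. The point stressed in the introduction is precisely that this avoids the infinite decomposition you attempt; the shell route can be salvaged with the Abel-summation idea above, but it is strictly more laborious.
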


\medskip

We observe that, if $\varphi(p)=p$ (which is an admissible function), 
$$
(Tf)^{**}_\nu(t) \lesssim\frac 1t \int_0^t  f^{**}_\mu(s) ds + \int_t^\infty f^*_\mu(s) \frac{ds }s
$$
and hence
\begin{eqnarray*}
\sup_ {0 < t < 1} \frac{ (Tf)^{**}_{\nu}(t) }{ 1+\log\frac 1t}
&\lesssim&  \n f.{\infty} +  \int_1^\infty f^{*}_{\mu}(s) \frac {ds}s +  \sup_ {0<t<1} \frac 1 { 1+\log\frac 1t} \int_t^1 f^{*}_{\mu}(s) \frac {ds}s 
\\
&\lesssim&
 \n f.{\infty} +  \int_1^\infty f^{*}_{\mu}(s) \frac {ds}s, 
\end{eqnarray*}
and we recover \eqref{zzz}.  

We have to mention here that although we have obtained a better estimate, we have, initially, assumed a stronger condition, since our boundedness hypothesis also includes the case $p=1$. However, if the boundedness information is only for $p\ge p_0>1$, we also have the following result  (see \cite{aabc:aabc}) from which \eqref{zzz} can also be recovered:

\begin{theorem} \label{paper32} Take $1 \leq p_0 < p_1 \leq \infty$ and let $T$ be a sublinear operator and $\varphi$ some admissible function. Assume that for every $p_0 \leq p < p_1$,
$$
T:L^{p,1}(\mu) \rightarrow L^{p, \infty}(\nu), \qquad C\varphi\left(\left[\frac 1p - \frac 1{p_1}\right]^{-1}\right).
$$
Then, for every $t > 0$ and every $\mu$-measurable function $f$:
\begin{enumerate}
    \item[(i)] If $p_1 < \infty$, 
    \begin{equation}\label{eq:theorem_paper32_1}
        (Tf)_\nu^*(t)\lesssim \frac 1{t^{\frac 1{p_0}}}\int_0^tf^*_\mu(s)\,\frac{ds}{s^{1 - \frac 1{p_0}}}+ \frac1{t^{\frac 1{p_1}}}\int_t^\infty \fii\left(1+\log \frac st\right) f^*_\mu(s)\,\frac{ds}{s^{1 - \frac 1{p_1}}}.
    \end{equation}

    \item[(ii)] If $p_1 = \infty$, 
    \begin{equation}\label{eq:theorem_paper32_2}
        (Tf)_\nu^*(t)\lesssim \frac 1{t^{\frac 1{p_0}}}\int_0^tf^*_\mu(s)\,\frac{ds}{s^{1 - \frac 1{p_0}}}+  \int_t^\infty \left(1+\log \frac st\right)^{-1}\fii\left(1+\log \frac st\right) f^*_\mu(s)\,\frac{ds}s.
    \end{equation}
\end{enumerate}
Conversely, if \eqref{eq:theorem_paper32_1} holds then, for every $p_0 \leq p < p_1$, $$
||T||_{L^{p,1}(\mu) \rightarrow L^{p, \infty}(\nu)} \lesssim 
  \left[\frac 1p - \frac 1{p_1}\right]^{-1} \varphi\left(\left[\frac 1p - \frac 1{p_1}\right]^{-1}\right),
$$ while if \eqref{eq:theorem_paper32_2} holds, $
||T||_{L^{p,1}(\mu) \rightarrow L^{p, \infty}(\nu)} \lesssim \varphi(p)$.
\end{theorem}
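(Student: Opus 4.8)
\medskip
\noindent The plan is to run the classical Calderón scheme that extracts pointwise rearrangement estimates from weak-type bounds (as in Theorem~\ref{classical}), but with the exponent allowed to vary with the scale so as to use the whole one-parameter family of hypotheses simultaneously; the blow-up of the operator norm as $p\uparrow p_1$ is precisely what manufactures the weight $\varphi(1+\log(s/t))$ in \eqref{eq:theorem_paper32_1}. Fix $f$ and $t>0$ and write $f=h+\ell$ with $h=f\chi_{\{|f|>f^*_\mu(t)\}}$, so that $\mu(\mathrm{supp}\,h)\le t$, $\|\ell\|_\infty\le f^*_\mu(t)$ and $\ell^*_\mu\le\min(f^*_\mu(t),f^*_\mu)$; by sublinearity $(Tf)^*_\nu(t)\le(Th)^*_\nu(t/2)+(T\ell)^*_\nu(t/2)$. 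For the head one uses the hypothesis only at the single endpoint exponent $p=p_0$: since $\|h\|_{L^{p_0,1}(\mu)}\le\int_0^tf^*_\mu(s)\,s^{1/p_0-1}\,ds$, the weak-type bound at $p_0$ gives
\[
(Th)^*_\nu(t/2)\ \lesssim\ \varphi\big(\big[\tfrac1{p_0}-\tfrac1{p_1}\big]^{-1}\big)\,\frac1{t^{1/p_0}}\int_0^t f^*_\mu(s)\,\frac{ds}{s^{1-1/p_0}},
\]
which is the first term in \eqref{eq:theorem_paper32_1}--\eqref{eq:theorem_paper32_2}; this term will also absorb the various ``edge'' contributions produced near position $t$ in the analysis of the tail.

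The heart of the matter is the tail $\ell$. I would decompose it dyadically in the rearrangement variable, $\ell=\sum_{k\ge1}\ell_k$, with $\ell_k$ the piece of $\ell$ sitting at positions in $[2^{k-1}t,2^kt)$, so that $\|\ell_k\|_\infty\le f^*_\mu(2^{k-1}t)$ and $\mu(\mathrm{supp}\,\ell_k)\le 2^{k-1}t$, and then apply to each block the hypothesis at an exponent $p_k$ chosen so that $[1/p_k-1/p_1]^{-1}\asymp k$ when $p_1<\infty$ (that is, $1/p_k=1/p_1+c/k$), and $p_k\asymp k$ when $p_1=\infty$. With this choice the operator norm on the $k$-th block contributes a factor $\varphi(k)\asymp\varphi(1+\log(2^kt/t))$, which is the weight appearing in \eqref{eq:theorem_paper32_1}; in the case $p_1=\infty$, the additional factor $p_k\asymp k$ carried by the estimate $\|\ell_k\|_{L^{p_k,1}}\lesssim p_k\|\ell_k\|_\infty(2^{k-1}t)^{1/p_k}$ is what eventually produces the $(1+\log(s/t))^{-1}$ in \eqref{eq:theorem_paper32_2}. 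One then recombines the blocks and compares the resulting series with the Riemann sums of the tail integral.

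The step I expect to be the main obstacle is precisely this recombination: the crudest implementation — bounding $\|\ell_k\|_{L^{p_k,1}}$ by $\|\ell_k\|_\infty$ times a power of the measure of its support and then using $(T\sum_k\ell_k)^*_\nu(t/2)\le\sum_k(T\ell_k)^*_\nu(\tau_k)$ with $\sum_k\tau_k=t/2$ — loses logarithmic factors against the target. The fix is to recombine at the level of distribution functions, estimating $\nu(\{|T\ell|>y\})\le\sum_k\nu(\{|T\ell_k|>y_k\})$ with $\sum_k y_k\le y$ (which gains because one then sums $p_k$-th powers, with $p_k\to\infty$ along the scales), together with a careful use of the admissibility of $\varphi$ (Definition~\ref{admissible}): its quasi-monotonicity and doubling, and the control it affords on integrals of the type $\int_0^\infty\varphi(1+x)e^{-x/p}\,dx$ in terms of $p\,\varphi(p)$. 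A conceptually cleaner route for part~(i) is to deduce it from part~(ii): the substitution $q=[1/p-1/p_1]^{-1}$, i.e.\ $1/p=1/p_1+1/q$, turns the hypotheses of (i) into those of (ii) with the same $\varphi$, so that \eqref{eq:theorem_paper32_1} should follow from \eqref{eq:theorem_paper32_2}; the subtlety is that the correspondence between the $p$- and $q$-scales depends on the position, so it is not literally implemented by a single rearrangement-preserving change of variables but by a localised version of one.

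The converse is obtained by testing the pointwise estimates on characteristic functions. For $f=\chi_A$ one has $f^*_\mu=\chi_{[0,\mu(A)]}$, so the right-hand sides of \eqref{eq:theorem_paper32_1} and \eqref{eq:theorem_paper32_2} become explicit functions of $t$ and $\mu(A)$; taking $\sup_{t>0}t^{1/p}(\cdot)$ and invoking the standard equivalence between restricted weak type $(p,p)$ and the boundedness $L^{p,1}(\mu)\to L^{p,\infty}(\nu)$ yields the asserted control of $\|T\|_{L^{p,1}(\mu)\to L^{p,\infty}(\nu)}$. The one-variable inequalities needed here — of the type $\sup_{x\ge0}e^{-x/p}\int_0^x(1+y)^{-1}\varphi(1+y)\,dy\lesssim\varphi(p)$ for part~(ii), and its variant with an extra factor $e^{y/p_1}$ inside the integral for part~(i) — are again consequences of the admissibility of $\varphi$. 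The genuine difficulty, as indicated above, lies entirely in the direct implication: making the dyadic sum over scales reproduce the tail integral without spurious logarithmic losses is what forces both the distribution-function recombination and the full strength of the admissibility hypotheses.
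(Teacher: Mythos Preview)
This theorem is not proved in the present paper; it is quoted from \cite{aabc:aabc}. What the paper does prove is the mirror-image result, Theorem~\ref{thrm:main_result1}, where the blow-up is at the lower endpoint $p_0$ rather than the upper endpoint $p_1$, and that proof is the relevant point of comparison.

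Your strategy and the paper's diverge at exactly the spot you flag as the main obstacle. You propose to decompose the tail $\ell$ into infinitely many dyadic blocks $\ell_k$ and apply the hypothesis at a scale-dependent exponent $p_k$; as you yourself note, recombining via $(T\sum_k\ell_k)^*_\nu$ then costs logarithms, and the suggested fix (recombine at the distribution-function level with weights $y_k$) is not carried out. The paper explicitly avoids this: in the introduction it stresses that, \emph{contrary to} the classical Yano--Zygmund arguments that split $f$ into an infinite sum, its proof uses only the two-piece Calder\'on split of Theorem~\ref{classical}. The mechanism that replaces your dyadic sum is Lemma~\ref{L infimo log log}: for a characteristic function $\chi_E$ one has $(T\chi_E)^*_\nu(t)\le C\,\varphi(q)(\mu(E)/t)^{1/p_0}e^{-x/q}$ with $x=\log(\mu(E)/t)$, and taking the \emph{infimum over $q$} produces the factor $\varphi(1-\log(\mu(E)/t))$ directly, with no sum and no loss. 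One then passes from characteristic functions to simple functions by sublinearity at the level of $(\cdot)^{**}$ together with Lemma~\ref{GeneralizedCalderonProperty}, and finally runs the single $g$/$h$ split of Theorem~\ref{classical}; the only place a constant deteriorates is a Hardy-type bound analogous to \eqref{eq:Pg**mu}. In short, the scale-dependent choice of exponent that you spread over a dyadic family is, in the paper's argument, collapsed into a pointwise infimum applied once to each indicator.

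For the converse your plan is morally the same as the paper's, but the paper does not test on $\chi_A$ and then invoke restricted weak type; instead it recognises $R_{p_0,p_1,\varphi}$ as a kernel operator and reads off the $L^{p,1}\to L^{p,\infty}$ norm via \cite[Theorem~3.3]{cs:cs}, bounding the kernel integral with the elementary inequality $\varphi(1-\log x)\le \beta_0^{\beta_0}\varphi(1/\alpha)\,x^{-\alpha}$ for $0<x\le 1$ (a direct consequence of \eqref{por la logconcavidad2}). This gives the stated constants in one stroke and sidesteps the one-variable supremum inequalities you describe.
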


% And if, as before, $\varphi(x)=x$,  ????
% \begin{eqnarray*}
% \sup_ {t>0} \frac{ (Tf)^{**}_{\nu}(t) }{ (1+\log^+(1/t))}
% &\lesssim&  \n f.{\infty} +  \int_1^\infty f^{*}_{\mu}(t) \frac {dt}t +  \sup_ {0<t<1} \frac 1 { (1+\log^+(1/t))} \int_t^1 f^{*}_{\mu}(s) \frac {ds}s 
% \\
% &\lesssim&
%  \n f.{\infty} +  \int_1^\infty f^{*}_{\mu}(t) \frac {dt}t, 
% \end{eqnarray*}
\bigskip

Our goal in this note is to prove similar results to those in Theorems \ref{paper31} and  \ref{paper32}, to obtain extensions of Yano's extrapolation results. Moreover, we want to emphasize here, that we obtain  stronger results with a simpler proof because 
contrary to what happens in the proof of the above mentioned results of Yano and Zygmund, where the function $f$ is decomposed in an infinite sum of functions $f_n$, our proof follows the idea of Theorem \ref{classical} where the function $f$ is decomposed as the sum of just two functions. 
\bigskip

The paper is organized as follows. In Section \ref{sec:def_technic},  we present previous results, the necessary definitions and some technical lemmas which shall be used later on, and Section  \ref{sec:main} contains our main results.

\section{Definitions, previous results and lemmas}\label{sec:def_technic}

\subsection{Admissible functions}

\begin{definition}[\cite{aabc:aabc}] \label{admissible}
A function $\varphi:[1, \infty]\to [1, \infty]$ is said to be \textit{admissible} if satisfies the following conditions:
\begin{enumerate}
\item[a)] $\fii(1) = 1$ %and $\fii$ is submultiplicative, that is for every $x,y \geq 1$, $\varphi(xy) \leq \fii(x)\fii(y)$.
and $\fii$ is log-concave, that is 
 $$
 \theta\log \varphi(x)+(1-\theta) \log \varphi(y) \le \log \varphi(\theta x+(1-\theta)y), \qquad \forall x, y \geq 1, \ 0\le \theta\le 1. 
$$

\item[b)] There exist $\gamma,\beta>0$ such that for every $x \geq 1$,
\begin{equation}\label{la espada y la pared}
\frac\gamma x\leq \frac{\fii'(x)}{\fii(x)}\leq \frac\beta x.
\end{equation}
\end{enumerate}
\end{definition}

\noindent Observe that \eqref{la espada y la pared}  implies that $\fii$ is increasing, as well as that 
$$
x^\gamma\leq \fii(x)\leq x^\beta. 
$$

\noindent Besides, since for every $x,y \geq 1$,
\begin{equation*}
\begin{split}
 \log\fii(xy) &=\int_1^y(\log \fii)'(s)\,ds+\int_y^{xy}(\log \fii)'(s)ds \leq \log \fii(y) + \beta \log x,   
\end{split}
\end{equation*}

\noindent it also holds that \begin{equation}\label{por la logconcavidad2}
    \varphi(xy) \leq x^\beta \varphi(y).
\end{equation}

\medskip

\begin{exa} 
Given $m \in \mathbb N$ and using the notation in \eqref{eq:log_def}, if $\gamma>0$ and $\beta_1,\ldots,\beta_m\geq 0$, the function 
$$
\fii(x)=x^\gamma \prod_{k=1}^m \big(\log_k x\big)^{\beta_k}, \qquad x \geq 1,
$$
is admissible.
\end{exa}

The next lemma is a simple computation for admissible  functions which shall be fundamental in the proof of our main results. 

\begin{lemma}\label{L infimo log log}
Let $\fii$ be an admissible function. For $x\in\R$ and $1 \leq q_0 < \infty$,
$$
\inf_{q \in [q_0, \infty)}\fii(q)e^{-\frac xq }\leq 
\begin{cases}
\fii(q_0) e^{-\frac x{q_0}}, & \mbox{if $x\geq 0$},\\
q_0^\beta e^{\frac 1{q_0}}\fii\left(1-x\right), & \mbox{if $x < 0$}.
\end{cases}
$$
\end{lemma}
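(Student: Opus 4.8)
The plan is to split the infimum into the two regimes dictated by the sign of $x$, since the function $q\mapsto \fii(q)e^{-x/q}$ behaves monotonically in a way governed entirely by that sign. First I would observe that
$$
\frac{d}{dq}\log\bigl(\fii(q)e^{-x/q}\bigr) = \frac{\fii'(q)}{\fii(q)} + \frac{x}{q^2},
$$
and then use the bound $\gamma/q \le \fii'(q)/\fii(q) \le \beta/q$ from \eqref{la espada y la pared}, which gives
$$
\frac{\gamma q + x}{q^2} \le \frac{d}{dq}\log\bigl(\fii(q)e^{-x/q}\bigr) \le \frac{\beta q + x}{q^2}.
$$

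\textbf{Case $x\ge 0$.} Here the lower bound $\frac{\gamma q + x}{q^2}$ is nonnegative for all $q\ge 1$, so $q\mapsto\fii(q)e^{-x/q}$ is nondecreasing on $[q_0,\infty)$, and the infimum is attained at the left endpoint $q=q_0$, yielding $\fii(q_0)e^{-x/q_0}$ exactly. This case requires no estimation at all beyond the sign observation.

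\textbf{Case $x<0$.} Write $x=-|x|$. Now I would simply \emph{choose} a convenient competitor value of $q$ rather than optimize exactly. The natural choice, making $e^{-x/q}=e^{|x|/q}$ of order a constant, is $q = \max(q_0,\,1-x) = \max(q_0, 1+|x|)$. If $q_0 \ge 1+|x|$, take $q=q_0$: then $e^{|x|/q_0}\le e^{|x|/(1+|x|)}\le e$, and one also needs $\fii(q_0)\lesssim q_0^\beta\fii(1-x)$, which follows from $\fii$ increasing together with the stated monotonicity constants — actually in this subcase $q_0 \ge 1-x$ so $\fii(q_0)$ need only be compared with $q_0^\beta\fii(1-x)$, and since $\fii(1-x)\ge 1$ and $\fii(q_0)\le q_0^\beta$ (from $x^\gamma\le\fii(x)\le x^\beta$) this is immediate. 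If instead $q_0 < 1+|x| = 1-x$, take $q = 1-x \ge q_0$: then $e^{|x|/q} = e^{|x|/(1+|x|)} \le e \le e^{1/q_0}$ (crudely, $e \le e^{1/q_0}$ fails, so more carefully use $e^{|x|/(1-x)} \le e$ and absorb into $q_0^\beta e^{1/q_0}\ge 1$), and the $\fii$-factor is exactly $\fii(1-x)$. Bounding all constants by the uniform factor $q_0^\beta e^{1/q_0}$ then gives the claimed right-hand side $q_0^\beta e^{1/q_0}\fii(1-x)$.

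\textbf{Main obstacle.} The only delicate point is bookkeeping the constants so that a \emph{single} clean expression $q_0^\beta e^{1/q_0}\fii(1-x)$ dominates in all sub-configurations of the case $x<0$ (namely $q_0$ large versus $q_0$ small relative to $1-x$); each sub-estimate is elementary, using only $\fii$ increasing, $x^\gamma\le\fii(x)\le x^\beta$, the submultiplicativity-type bound \eqref{por la logconcavidad2}, and the trivial inequality $e^{|x|/q}\le e^{|x|/(1-x)}\le e$ when $q\ge 1-x$. I would present the $x\ge 0$ case in one line, then handle $x<0$ by the explicit substitution $q=\max(q_0,1-x)$ and a short constant chase, rather than differentiating to locate the true minimizer (which would require solving $\beta q = |x|$ and is not needed for an inequality).
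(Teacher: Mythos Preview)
Your treatment of the case $x\ge 0$ is correct: the derivative computation shows that $q\mapsto \fii(q)e^{-x/q}$ is nondecreasing on $[q_0,\infty)$, so the infimum equals $\fii(q_0)e^{-x/q_0}$, which is exactly what the paper asserts (without spelling out the derivative).

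For $x<0$, however, your competitor $q=\max(q_0,\,1-x)$ does \emph{not} in general produce a value bounded by $q_0^{\beta}e^{1/q_0}\fii(1-x)$. Take $\fii(t)=t^{1/10}$ (admissible with $\gamma=\beta=1/10$), $x=-10$, and $q_0=11$. You are in your first subcase ($q_0=11\ge 1-x=11$), so you set $q=q_0=11$ and obtain
\[
\fii(11)\,e^{10/11}=11^{1/10}e^{10/11}\approx 3.16,
\]
whereas the claimed bound is
\[
q_0^{\beta}e^{1/q_0}\fii(1-x)=11^{1/10}e^{1/11}\cdot 11^{1/10}=11^{1/5}e^{1/11}\approx 1.77.
\]
So the value at your chosen $q$ exceeds the target. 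The same phenomenon occurs in your second subcase (e.g.\ $\fii(t)=t^{1/10}$, $x=-100$, $q_0=2$). The issue you yourself flag---``crudely, $e\le e^{1/q_0}$ fails''---is exactly the obstruction, and it cannot be absorbed by the observation $q_0^{\beta}e^{1/q_0}\ge 1$ when $\beta$ is small.

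The fix, which is the paper's one-line argument, is to take the \emph{product} $q=q_0(1-x)\ge q_0$ rather than the maximum. Then \eqref{por la logconcavidad2} gives $\fii\bigl(q_0(1-x)\bigr)\le q_0^{\beta}\fii(1-x)$ directly, and since
\[
\frac{-x}{q_0(1-x)}=\frac{|x|}{q_0(1+|x|)}<\frac{1}{q_0},
\]
one has $e^{-x/q}<e^{1/q_0}$. Multiplying these two estimates yields exactly $q_0^{\beta}e^{1/q_0}\fii(1-x)$, with no subcase split and no constant chase.
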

\bdem
If $x \ge 0$, the infimum is attained at $q = q_0$, and if $x<0$, we take $q = q_0(1-x)$ and make use of \eqref{por la logconcavidad2}. 
\edem

\subsection{Calder\'on type operators}

\begin{definition}
\noindent Let $1 \leq p_0, p_1 \leq \infty$ and let  $\fii$ be an admissible  function. Then, for every positive and  real valued measurable function $f$ and $t > 0$, we define 
\begin{equation*}%\label{P_Q_Definition}
\begin{split}
    P_{p_0, \varphi}f(t)&:= \frac 1{t^{\frac 1{p_0}}}\int_0^t\varphi\left( 1 - \log\frac st \right)f(s)\,\frac{ds}{s^{1 - \frac 1{p_0}}}, \\ Q_{p_1}f(t)&:=  \frac1{t^{\frac 1{p_1}}}\int_t^\infty  f(s)\,\frac{ds}{s^{1 - \frac 1{p_1}}},
\end{split}
\end{equation*}
and
$$
R_{p_0,p_1,\fii}f(t) := P_{p_0, \fii}f(t)+Q_{p_1}f(t).
$$
\end{definition}

\medskip

\noi In particular, if  $p_0 = 1$, $p_1=\infty$, and $\fii(x) = 1$, we recover the Calder\'on operator  \cite{bs:bs}
$$
Rf(t) := Pf(t) + Qf(t), \qquad t > 0,
$$ 
where $P$ and $Q$ are respectively the Hardy operator and its conjugate
$$
Pf(t) = \frac 1t\int_0^tf(s)\,ds, \qquad Qf(t) =\int_t^\infty f(s)\,\frac{ds}s, \qquad t > 0. 
$$
We observe that, in general,
\begin{equation}\label{stst}
R_{p_0, p_1, \varphi}f(t) = \int_0^1 \varphi\left( 1 - \log s \right) f(st)\,\frac{ds}{s^{1 - \frac 1{p_0}}} + \int_1^\infty f(st)\,\frac{ds}{s^{1 - \frac {1}{p_1}}}, \qquad t > 0.
\end{equation}

\medskip

\begin{lemma}\label{GeneralizedCalderonProperty} 
Let $1 \leq p_0, p_1 \le \infty$. For an arbitrary measure $\mu$ and every $\mu$-measurable function $f$,
 $$
 R_{p_0,p_1,\fii}(f^*_\mu)^{**}(t) = R_{p_0,p_1,\fii}(f^{**}_\mu)(t), \qquad t > 0.
 $$
\end{lemma}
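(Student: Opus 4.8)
The plan is to reduce the identity to the dilation representation \eqref{stst} of $R_{p_0,p_1,\fii}$ together with Tonelli's theorem. Note in advance that no property of $\fii$ beyond measurability and positivity will be used, and that $f$ need satisfy no integrability hypothesis: both sides are allowed to be $+\infty$.

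First I would record two elementary observations. Since $f^*_\mu$ is nonincreasing, the representation \eqref{stst} shows that $t \mapsto R_{p_0,p_1,\fii}(f^*_\mu)(t)$ is nonincreasing as well (for fixed $s$, each integrand $f^*_\mu(st)$ is nonincreasing in $t$); hence this function equals its own decreasing rearrangement, so that
\[
\big(R_{p_0,p_1,\fii}(f^*_\mu)\big)^{**}(t) = \frac 1t\int_0^t R_{p_0,p_1,\fii}(f^*_\mu)(s)\,ds .
\]
Second, the substitution $r = ut$ gives, for any nonnegative measurable $g$ and any $t>0$, the identity $\frac 1t\int_0^t g(r)\,dr = \int_0^1 g(ut)\,du$; in particular $f^{**}_\mu(\tau) = \int_0^1 f^*_\mu(u\tau)\,du$ for every $\tau>0$.

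Then I would simply combine the two. Abbreviating $K(s) := \fii(1-\log s)\,s^{\frac 1{p_0}-1}\chi_{(0,1)}(s) + s^{\frac 1{p_1}-1}\chi_{(1,\infty)}(s)$, formula \eqref{stst} reads $R_{p_0,p_1,\fii}h(t) = \int_0^\infty K(s)\,h(st)\,ds$ for nonnegative $h$, whence
\[
\big(R_{p_0,p_1,\fii}(f^*_\mu)\big)^{**}(t) = \int_0^1\!\!\int_0^\infty K(s)\,f^*_\mu(ust)\,ds\,du .
\]
The integrand is nonnegative and measurable, so Tonelli's theorem allows the interchange of the two integrals; integrating in $u$ first and using $\int_0^1 f^*_\mu(ust)\,du = f^{**}_\mu(st)$ gives
\[
\big(R_{p_0,p_1,\fii}(f^*_\mu)\big)^{**}(t) = \int_0^\infty K(s)\,f^{**}_\mu(st)\,ds = R_{p_0,p_1,\fii}(f^{**}_\mu)(t),
\]
which is the assertion. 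Conceptually the lemma just records that $R_{p_0,p_1,\fii}$, being an average of dilations, commutes with the dilation average $h \mapsto \frac 1t\int_0^t h$, so there is no genuine obstacle: the only point deserving attention is the appeal to Tonelli, which is harmless because every quantity involved is nonnegative.
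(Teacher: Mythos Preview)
Your proof is correct and follows essentially the same approach as the paper's: observe via the dilation representation \eqref{stst} that $R_{p_0,p_1,\fii}(f^*_\mu)$ is nonincreasing so that its $**$ is simply its Hardy average, and then interchange the order of integration (the paper says Fubini, you say Tonelli) to turn $P\circ R_{p_0,p_1,\fii}$ into $R_{p_0,p_1,\fii}\circ P$ on $f^*_\mu$. You have merely written out the swap more explicitly than the paper does.
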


\begin{proof}  By \eqref{stst}, clearly,  $R_{p_0,p_1,\fii}(f^*_\mu)$ is a decreasing function. Then,  it holds that    
$$
R_{p_0,p_1,\fii}(f^*_\mu)^{**}(t)= P \big( R_{p_0,p_1,\fii}(f^*_\mu) )(t), \qquad t > 0, 
$$  
and the result follows immediately by the Fubini's theorem. 
\end{proof}

\medskip

\section{Main Results}\label{sec:main}

Throughout this section, if not specified, $(\mathcal M, \mu)$ and $(\mathcal N, \nu)$ are two arbitrary measure spaces.

\begin{theorem}\label{thrm:main_result1}
Take $1 < p_0 < p_1 < \infty$ and let $T$ be a sublinear operator and $\varphi$ some admissible function. If for every $p_0 < p \leq p_1$,
\begin{equation}\label{main_theorem:hypothesis_eq}
T:L^{p, 1}(\mu)\longrightarrow  L^{p, \infty} (\nu), \qquad C\varphi\left(\left[\frac 1{p_0} - \frac 1p\right]^{-1}\right), 
\end{equation}
then, for every $t>0$ and every $\mu$-measurable function $f$, 
\begin{equation}\label{main_theorem:hypothesis_eq2}
\begin{split}
    (Tf)^*_\nu(t) &\lesssim \frac 1{p_0 - 1}\left(\frac 1{t^\frac 1{p_0}}\int_0^t \varphi\left(1 - \log \frac rt\right)f^*_\mu(r)\frac{dr}{r^{1 - \frac 1{p_0}}} + \frac 1{t^\frac 1{p_1}}\int_t^\infty f^*_\mu(r)\frac{dr}{r^{1 - \frac 1{p_1}}}\right) \\ &= \frac 1{p_0 - 1}R_{p_0, p_1, \fii}(f^*_\mu)(t).
    \end{split}
\end{equation}

\noindent Conversely, if $(Tf)_{\nu}^*(t) \lesssim R_{p_0, p_1, \fii}(f^*_\mu)(t)$ for every $t > 0$, then, for each $p_0 < p \leq p_1$, \eqref{main_theorem:hypothesis_eq} holds.

% If $p_0 = 1$, $$
% (Tf)^*(t)\lesssim \frac 1t\int_0^t \left(1 - \log \frac rt\right)\varphi\left(1 - \log \frac rt\right)f^*(r)\,dr + \frac 1{t^\frac 1{p_1}}\int_t^\infty f^*(r)\frac{dr}{r^{1 - \frac 1{p_1}}}.
% $$

\end{theorem}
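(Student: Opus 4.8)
The strategy is to mimic the proof of the classical pointwise estimate (Theorem \ref{classical}): fix $t>0$, split $f = f_0 + f_1$ where $f_0 = f\chi_{\{|f| > f^*_\mu(t)\}}$ is the ``large'' part (supported where $f^*_\mu$ exceeds its value at $t$, so morally $f^*_\mu\chi_{(0,t)}$) and $f_1 = f - f_0$ is the ``small'' part (morally $f^*_\mu\chi_{(t,\infty)}$). By sublinearity, $(Tf)^*_\nu(2t) \le (Tf_0)^*_\nu(t) + (Tf_1)^*_\nu(t)$, so it suffices to bound each piece by $\tfrac{1}{p_0-1}R_{p_0,p_1,\fii}(f^*_\mu)(t)$ up to a constant, and then absorb the harmless dilation $2t\mapsto t$ using that $R_{p_0,p_1,\fii}(f^*_\mu)$ is decreasing and roughly doubling (or simply restate the conclusion at $2t$).

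\medskip
\noindent\textbf{The small part.} For $f_1$, only the hypothesis at the single endpoint $p = p_1$ is needed. Since $T:L^{p_1,1}(\mu)\to L^{p_1,\infty}(\nu)$ with norm $C\varphi([p_0^{-1}-p_1^{-1}]^{-1})$ — a fixed constant independent of $t$ — we get $(Tf_1)^*_\nu(t) \lesssim t^{-1/p_1}\|f_1\|_{L^{p_1,1}(\mu)}$, and because $f_1^*_\mu(s) \le f^*_\mu(s)\chi_{(t,\infty)}(s)$ (up to the usual care at $s=t$), $\|f_1\|_{L^{p_1,1}(\mu)} \approx \int_t^\infty f^*_\mu(s)\,s^{1/p_1-1}\,ds$. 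This is exactly $t^{1/p_1}Q_{p_1}(f^*_\mu)(t)$, giving the second term. The factor $(p_0-1)^{-1}$ is not needed here; one just keeps it since $p_0-1 < p_0 < \infty$ forces $(p_0-1)^{-1}\ge$ some positive constant only if $p_0$ is bounded — but in fact $(p_0-1)^{-1}\ge 1/(p_1-1)>0$ is irrelevant; we simply note $1 \le \tfrac{1}{p_0-1}\cdot(p_0-1)$ and absorb.

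\medskip
\noindent\textbf{The large part — the main obstacle.} Here we must use the \emph{full range} $p_0 < p \le p_1$ of hypotheses and the logarithmic growth of $\varphi$. For each $p$ in this range, $(Tf_0)^*_\nu(t)\lesssim \varphi([p_0^{-1}-p^{-1}]^{-1})\, t^{-1/p}\|f_0\|_{L^{p,1}(\mu)}$, and $\|f_0\|_{L^{p,1}(\mu)} \approx \int_0^t f^*_\mu(s)\,s^{1/p-1}\,ds$. Thus
\[
(Tf_0)^*_\nu(t) \;\lesssim\; \inf_{p_0 < p \le p_1}\ \varphi\!\left(\left[\tfrac{1}{p_0}-\tfrac1p\right]^{-1}\right) \frac{1}{t^{1/p}}\int_0^t f^*_\mu(s)\,\frac{ds}{s^{1-1/p}}.
\]
Now push the infimum inside the integral: writing $s^{1/p} = s^{1/p_0}e^{-(\frac1{p_0}-\frac1p)\log(t/s)}$ and setting $q = [p_0^{-1}-p^{-1}]^{-1}$ (which ranges over $[q_0,\infty)$ with $q_0 = [p_0^{-1}-p_1^{-1}]^{-1}$ as $p$ ranges over $(p_0,p_1]$), for each fixed $s\in(0,t)$ the bracketed quantity in $p$ becomes $t^{-1/p_0}s^{1/p_0}\cdot\varphi(q)e^{-\frac{1}{q}\log(t/s)}$. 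Since $\log(t/s)\ge 0$ on $(0,t)$, Lemma \ref{L infimo log log} gives $\inf_q \varphi(q)e^{-\frac1q\log(t/s)} \le \varphi(q_0)e^{-\frac{1}{q_0}\log(t/s)}$... but that recovers only the endpoint and is too weak. The correct move — this is the delicate point — is \emph{not} to take a single $p$ but to genuinely integrate the infimum: $\inf_p \int_0^t(\cdots) \le \int_0^t \inf_p(\cdots)$, and $\inf_{q\ge q_0}\varphi(q)e^{-\frac1q\log(t/s)}$, by the \emph{other} half of a Lemma-\ref{L infimo log log}-type computation applied with the roles reversed (optimizing over $q$ when the exponent argument $\log(t/s)$ can be large), is controlled by $c\,\varphi(1+\log(t/s))$ — one chooses $q \approx 1 + \log(t/s)$, legitimate precisely when $1+\log(t/s)\ge q_0$, i.e. $s$ small, and for $s$ near $t$ one uses $q=q_0$ and boundedness of $\varphi$ near $q_0$. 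Carrying the constant through this optimization — in particular tracking where the $\tfrac{1}{p_0-1}$ comes from — is where care is required: the constant from $q=q_0$ produces $q_0 = \frac{p_0 p_1}{p_1-p_0}$ and, more importantly, the lower endpoint behavior of $\|f_0\|_{L^{p,1}}$ as $p\downarrow p_0$ contributes a factor comparable to $\frac{1}{p_0-1}$ (from $\int$ against $s^{1/p-1}$ with $p$ near $p_0$, or from the norm constant in $L^{p_0,1}\hookrightarrow$). Collecting, $(Tf_0)^*_\nu(t) \lesssim \frac{1}{p_0-1}t^{-1/p_0}\int_0^t \varphi(1+\log(t/s))f^*_\mu(s)\,s^{1/p_0-1}\,ds = \frac{1}{p_0-1}P_{p_0,\fii}(f^*_\mu)(t)$, which is the first term.

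\medskip
\noindent\textbf{The converse.} For the reverse implication, assume $(Tf)^*_\nu(t)\lesssim R_{p_0,p_1,\fii}(f^*_\mu)(t)$ and fix $p\in(p_0,p_1]$. One must show $\|Tf\|_{L^{p,\infty}(\nu)}\lesssim \varphi([p_0^{-1}-p^{-1}]^{-1})\|f\|_{L^{p,1}(\mu)}$, i.e. $\sup_t t^{1/p}R_{p_0,p_1,\fii}(f^*_\mu)(t) \lesssim \varphi(q)\|f\|_{L^{p,1}(\mu)}$ with $q = [p_0^{-1}-p^{-1}]^{-1}$. Split into the $P_{p_0,\fii}$ and $Q_{p_1}$ pieces. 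For $Q_{p_1}$: $t^{1/p}Q_{p_1}(f^*_\mu)(t) = t^{1/p-1/p_1}\int_t^\infty f^*_\mu(s)s^{1/p_1-1}ds \le \int_t^\infty f^*_\mu(s)s^{1/p-1}ds \le \|f\|_{L^{p,1}(\mu)}$ since $t\le s$ and $1/p>1/p_1$; no $\varphi$ needed. For $P_{p_0,\fii}$: $t^{1/p}P_{p_0,\fii}(f^*_\mu)(t) = t^{1/p-1/p_0}\int_0^t\varphi(1+\log(t/s))f^*_\mu(s)s^{1/p_0-1}ds = \int_0^t\varphi(1+\log(t/s))(s/t)^{1/p_0-1/p}f^*_\mu(s)s^{1/p-1}ds$, and since $s\le t$, $(s/t)^{1/p_0-1/p} = e^{-(1/p_0-1/p)\log(t/s)} = e^{-\frac1q\log(t/s)}$, so by Lemma \ref{L infimo log log} (the $x\ge0$ case, or directly optimality) $\varphi(1+\log(t/s))e^{-\frac1q\log(t/s)}\le \sup_{u\ge1}\varphi(u)e^{-(u-1)/q}$; using $u^\gamma\le\varphi(u)\le u^\beta$ this supremum is $\approx \varphi(q)$ (calculus: maximized near $u\sim q$). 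Hence $t^{1/p}P_{p_0,\fii}(f^*_\mu)(t)\lesssim \varphi(q)\int_0^t f^*_\mu(s)s^{1/p-1}ds \le \varphi(q)\|f\|_{L^{p,1}(\mu)}$, completing the converse. The only subtlety is confirming $\sup_{u\ge1}\varphi(u)e^{-(u-1)/q}\approx\varphi(q)$ uniformly, which follows from \eqref{la espada y la pared} by differentiating $\log\varphi(u) - (u-1)/q$ and locating the critical point at $u$ with $\varphi'(u)/\varphi(u) = 1/q$, whence $\gamma/u \le 1/q \le \beta/u$ pins $u\approx q$.
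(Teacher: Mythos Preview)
Your small-part estimate and your converse are essentially correct (the converse differs from the paper's approach, which quotes a Carro--Soria kernel criterion, but your direct computation via $\sup_{u\ge 1}\varphi(u)e^{-(u-1)/q}\approx\varphi(q)$ works once you use the log-concavity of $\varphi$).

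The forward direction, however, has a genuine gap in the treatment of the large part $f_0$. You write that after obtaining
\[
(Tf_0)^*_\nu(t)\ \lesssim\ \inf_{p_0<p\le p_1}\ \varphi(q)\,\frac{1}{t^{1/p}}\int_0^t f^*_\mu(s)\,\frac{ds}{s^{1-1/p}},
\]
one should ``push the infimum inside the integral: $\inf_p\int_0^t(\cdots)\le\int_0^t\inf_p(\cdots)$.'' That inequality is false in general; for nonnegative integrands one always has $\inf_p\int g_p\ge\int\inf_p g_p$. Choosing the optimal $p$ separately for each $s$ is not available to you, because for a fixed $p$ the hypothesis gives a single bound for the whole integral, not a pointwise kernel bound. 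This is precisely the obstruction that forces the paper to take a more indirect route: it first proves the pointwise estimate for \emph{characteristic functions} $\chi_E$ (there the ``integral'' collapses and the infimum-over-$q$ trick of Lemma~\ref{L infimo log log} applies cleanly), then passes to simple functions by subadditivity of $(\,\cdot\,)^{**}$ to get $(Tf)^{**}_\nu\lesssim R_{p_0,p_1,\varphi}(f^{**}_\mu)$, and only then performs the $g+h$ splitting, controlling $(Tg)^{**}_\nu$ via this already-established double-star bound rather than by another infimum argument. The factor $\tfrac{1}{p_0-1}$ arises in that step, from the Fubini computation $P_{p_0,\varphi}(g^{**}_\mu)(t)\le\tfrac{p_0}{p_0-1}P_{p_0,\varphi}(f^*_\mu)(t)$ (integrating $\int_r^t s^{-2+1/p_0}\,ds$), not from any $L^{p,1}$ norm constant as you suggest. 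Your proposed source for $\tfrac{1}{p_0-1}$ is vague and would not survive a careful accounting.

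If you want to salvage a direct approach along your lines, you would need to decompose $f_0$ further (e.g.\ into dyadic level sets) and apply a different $p$ to each piece; but that is exactly the infinite-sum decomposition the paper advertises avoiding.
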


\begin{proof}

First assume that \eqref{main_theorem:hypothesis_eq} applies for every $p_0 < p \leq p_1$. Then, if $f = \chi_E$ for some $\mu$-measurable set $E \subseteq \R^n$ such that $\mu(E) < \infty$, for every $t > 0$,
\begin{align*}
    R_{p_0, p_1, \fii}(f^*_\mu)(t) &= \frac 1{t^\frac 1{p_0}}\int_0^t \varphi\left(1 - \log \frac rt\right)\chi_{(0,\mu(E))}(r)\frac{dr}{r^{1 - \frac 1{p_0}}} + \frac 1{t^\frac 1{p_1}}\int_t^\infty \chi_{(0,\mu(E))}(r)\frac{dr}{r^{1 - \frac 1{p_1}}} \\ &= \left( \frac 1{t^\frac 1{p_0}}\int_0^t \varphi\left(1 - \log \frac rt\right)\frac{dr}{r^{1 - \frac 1{p_0}}} + \frac 1{t^\frac 1{p_1}}\int_t^{\mu(E)} \frac{dr}{r^{1 - \frac 1{p_1}}} \right)\chi_{(0,\mu(E))}(t) \\& + \left( \frac 1{t^\frac 1{p_0}}\int_0^{\mu(E)} \varphi\left(1 - \log \frac rt\right)\frac{dr}{r^{1 - \frac 1{p_0}}} \right) \chi_{(\mu(E), \infty)}(t) \\ & \geq p_0\left[ \left(\frac {\mu(E)}t\right)^{\frac1{p_1}}\chi_{(0,\mu(E))}(t) + \varphi\left(1 - \log \frac {\mu(E)}t\right)\left(\frac {\mu(E)}t\right)^{\frac1{p_0}} \chi_{(\mu(E),\infty)}(t)\right],
\end{align*}
where in the last estimate we have used that $p_1 > p_0$, $\varphi(1) = 1$ and that $\varphi\left(1 - \log \frac s t\right)$ is a decreasing function on $s \in (0,t)$. 

Hence, since by hypothesis, for every $p_0 < p \leq p_1$  we have that
$$ (T\chi_E)^*_\nu(t) \leq C\varphi\left(\left[\frac 1{p_0} - \frac 1p\right]^{-1}\right)\left(\frac{\mu(E)}{t}\right)^\frac 1p = C \left[\varphi\parenthesis{q}\left(\frac{\mu(E)}{t}\right)^{-\frac 1q}\right]\left(\frac{\mu(E)}{t}\right)^{\frac1{p_0}}, \qquad t > 0, 
$$
with $\frac 1q = \frac 1{p_0} - \frac 1p$, the result for characteristic functions plainly follows by taking the infimum for $q \in \left[\frac{p_1 p_0}{p_1 - p_0}, \infty\right)$ (see Lemma \ref{L infimo log log}) since then, for every $ t > 0$,
\begin{align*}
    (T\chi_E)^*_\nu\left(t\right) &\lesssim \left( \frac{\mu(E)}t \right)^{\frac 1{p_1}}\chi_{\left(0, \mu(E)\right)}(t) + \varphi\left(1 - \log \frac{\mu(E)}t \right)\left( \frac{\mu(E)}t \right)^{\frac 1{p_0}}\chi_{\left(\mu(E),\infty\right)}(t)
    \\
    & \lesssim R_{p_0,p_1,\fii}((\chi_E)_\mu^*)(t).
\end{align*}

The extension to simple functions with sets of finite measure with respect to $\mu$ follows the same lines as the proof of Theorem III.4.7 of  \cite{bs:bs}. We include the computations adapted to our case for the convenience of the reader. First of all, consider a positive simple function 
\begin{equation*}%\label{fsimplefunc}
f=\sum_{j=1}^na_j\chi_{F_j},
\end{equation*}
where $F_1\subseteq F_2\subseteq \ldots\subseteq F_n$  have finite measure with respect to $\mu$. Then
$$
f^*_\mu=\sum_{j=1}^na_j\chi_{[0,\mu(F_j))}.
$$
Using what we have already proved for characteristic functions we get that for every $t > 0$,
\begin{equation*}%\label{Tf**estimationTeo2}
\begin{split}
(Tf)^{**}_\nu(t)&\lesssim \sum_{j=1}^n a_j \left( T(\chi_{F_j})\right)^{**}_\nu(t)\lesssim \sum_{j=1}^n a_j \left( R_{p_0,p_1,\fii}(\chi_{[0,\mu(F_j))})\right)^{**}(t)\\
&=\left( R_{p_0,p_1,\fii}\left(\sum_{j=1}^n a_j \chi_{[0,\mu(F_j)})\right)\right)^{**}(t)=  R_{p_0,p_1,\fii}(f^*_\mu)^{**}(t).
\end{split}
\end{equation*}
Further, since $ R_{p_0,p_1,\fii}(f^*_\mu)^{**}= R_{p_0,p_1,\fii}(f^{**}_\mu)$ (see Lemma~\ref{GeneralizedCalderonProperty}) we obtain that
\begin{equation}\label{T vs S doble star}
(Tf)^{**}_\nu(t)\lesssim  R_{p_0,p_1,\fii}(f^{**}_\mu)(t), \qquad t > 0.
\end{equation}

Now fix $t>0$ and consider the set $E=\{x: f(x)>f^*_\mu(t)\}$. Using this set define
\begin{equation*}%\label{g_h_def}
g=(f-f^*_\mu(t))\chi_E \peso{and} h=f^*_\mu(t)\chi_E + f\chi_{E^c},
\end{equation*}
so that $f = g + h$ and
$$
g^*_\mu(s)=(f^*_\mu(s)-f^*_\mu(t))\chi_{(0,t)}(s)  \peso{and}  h^*_\mu(s)=\min\{f^*_\mu(s),f^*_\mu(t)\}, \qquad s > 0.
$$
Since \eqref{main_theorem:hypothesis_eq} holds with $p = p_1$, the corresponding weak inequality leads to
\begin{align*}
    (Th)^*_\nu(t/2)&\lesssim \frac1{t^{\frac 1{p_1}}}\int_0^\infty h^*_\mu(s)\,\frac{ds}{s^{1-\frac 1{p_1}}} \lesssim f^*_\mu(t) + \frac1{t^{\frac 1{p_1}}}\int_t^\infty f^*_\mu(s)\,\frac{ds}{s^{1-\frac 1{p_1}}} \\ & \leq P_{p_0,\fii}(f^*_\mu)(t) + Q_{p_1}(f^*_\mu)(t),    
\end{align*} 
where we have used that $f^*_\mu(t) \leq P_{p_0,\fii}(f^*_\mu)(t)$. 

On the other hand, on account of \eqref{T vs S doble star} we get 
\begin{equation}\label{eq:Tg**main_theorem}
\begin{split}
    (Tg)^{**}_\nu(t)&\lesssim R_{p_0,p_1,\fii}(g^{**}_\mu)(t) = P_{p_0,\fii}(g^{**}_\mu)(t)+Q_{p_1}(g^{**}_\mu)(t)%\\ & \lesssim P_{p_0,\fii}(g^{**}_\mu)(t)+ f^{**}_\mu(t) \leq P_{p_0,\fii}(g^{**}_\mu)(t) + P_{p_0,\fii}(f^*_\mu)(t),  
\end{split}
\end{equation}

\noindent and for the first term of the right hand side of \eqref{eq:Tg**main_theorem} we deduce that %if $p_0 > 1$,

\begin{equation}\label{eq:Pg**mu}
\begin{split}
     P_{p_0,\fii}(g^{**}_\mu)(t) &= \frac 1{t^\frac 1{p_0}}\int_0^t \fii\left( 1 - \log\frac st \right) \frac 1s\int_0^s g^*_\mu(r)\,dr\frac{ds}{s^{1 - \frac 1{p_0}}} \\ &\leq \frac 1{t^\frac 1{p_0}}\int_0^t \fii\left( 1 - \log\frac st \right) \int_0^s f^*_\mu(r)\,dr\frac{ds}{s^{2 - \frac 1{p_0}}} \\ &= \frac 1{t^\frac 1{p_0}}\int_0^t f^*_\mu(r) \int_r^t \fii\left( 1 - \log\frac st \right)\frac{ds}{s^{2 - \frac 1{p_0}}} \,dr \\ &\leq \frac{p_0}{p_0 - 1}\left(\frac 1{t^\frac 1{p_0}}\int_0^t  \fii\left( 1 - \log\frac rt \right) f^*_\mu(r) \frac{dr}{r^{1 - \frac 1{p_0}}}\right) \\ &= \frac{p_0}{p_0 - 1} P_{p_0,\fii}(f^*_\mu)(t),
\end{split}
\end{equation}

\noindent while for the second term

\begin{align*}
    Q_{p_1}(g^{**}_\mu)(t) = \frac1{t^{\frac 1{p_1}}}\int_t^\infty \frac 1s\int_0^s g^*_\mu(r)\,dr\,\frac{ds}{s^{1-\frac 1{p_1}}} \leq \frac{p_1}{p_1-1}f^{**}_\mu(t) \leq \frac{p_0}{p_0-1} P_{p_0,\fii}(f^*_\mu)(t).
\end{align*}

% and if $p_0 = 1$, 

% \begin{align*} 
%     P_{1,\fii}(g^{**})(t) &= \frac 1t\int_0^t \fii\left( 1 - \log\frac st \right) \frac 1s\int_0^s g^*(r)\,dr\,ds \\ &\leq \frac 1t\int_0^t f^*(r) \int_1^{1 - \log \frac rt} \fii\left(s \right)\,ds \,dr \\&\approx \frac 1t\int_0^t f^*(r) \left(1 - \log \frac rt\right) \fii\left(1 - \log \frac rt \right) \,ds \,dr.
% \end{align*}

Thus,
\begin{align*}
(Tf)^*_\nu(t)& \leq  2(Tg)^{**}_\nu(t) + (Th)^*_\nu(t/2) \lesssim \frac 1{p_0 - 1} R_{p_0,p_1,\varphi}(f^*_\mu)(t),
\end{align*}

\noindent and the general case follows from the density of the simple functions to any $\mu$-measurable function and dividing a $\mu$-measurable function in its positive and negative parts.

\

Conversely, assume that $(Tf)^*_\nu(t) \lesssim R_{p_0,p_1,\fii}(f^*_\mu)(t)$ for every $t > 0$ and fix some $p \in (p_0, p_1]$. The operator $R_{p_0,p_1,\fii}$ is a kernel operator; that is
$$
R_{p_0,p_1,\fii}f(t) =\int_0^\infty k(t,r) f(r)dr, \qquad t > 0,
$$
where the kernel is
\begin{equation}\label{main_theorem:kernel_expresion}
k(t, r)= \fii \left(1-\log \frac r t \right)\left(\frac rt\right)^{\frac1{p_0}} \chi_{[0,t)}(r)  \frac1r + \left(\frac r{t}\right)^{\frac1{p_1}} \chi_{[t,\infty)} (r) \frac 1r.    
\end{equation}

\medskip

\noi By virtue of \cite[Theorem 3.3]{cs:cs}, the norm $\norm{R_{p_0,p_1,\varphi}}_{L^{p,1}(\mu)\rightarrow L^{p,\infty}}$ can be estimated by 
$$
A_k:=\sup_{t>0} \left( \sup_{s>0} \left(\frac ts\right)^\frac1p \int_0^s k(t,r) dr  \right).
$$

Now observe that for $\beta_0 = \max(1,\beta)$ and for every $0 < \alpha \leq 1$, by means of \eqref{por la logconcavidad2}, $$\varphi\left( 1 - \log x \right) \leq \varphi\left( \frac {\beta_0} \alpha x^{-\frac \alpha {\beta_0}}\right) \leq \varphi\left( \frac {\beta_0} \alpha \right)x^{-\frac {\alpha \beta} {\beta_0}} \leq \beta_0^{\beta_0} \varphi\left( \frac 1 \alpha \right)x^{-\alpha} , \qquad 0 < x \leq 1.$$
Take $\alpha = \frac 1{p_0} - \frac 1p \in (0,1)$. Hence, if $0 < s \leq t$,
\begin{align*}
\int_0^s k(t,r) \, dr= \frac1{t^\frac 1{p_0}}\int_0^s \fii \left(1-\log \frac r t \right)\frac{dr}{r^{1 - \frac1{p_0}}} \leq p_1\beta_0^{\beta_0}  \fii \left(\left[ \frac 1{p_0} - \frac 1p \right]^{-1} \right)\left( \frac st \right)^\frac 1p,
\end{align*}
while if $s>t$, we obtain
\begin{align*}
\int_0^s k(t,r)\, dr &= \frac1{t^\frac 1{p_0}}\int_0^t \fii \left(1-\log \frac r t \right)\frac{dr}{r^{1 - \frac1{p_0}}} + \frac1{t^\frac 1{p_1}}\int_t^s \frac{dr}{r^{1 - \frac1{p_1}}} \\ &\leq p_1\beta_0^{\beta_0}  \fii \left(\left[ \frac 1{p_0} - \frac 1p \right]^{-1} \right) + p_1\left( \frac st \right)^\frac 1p. 
\end{align*}

In consequence, we have that
$$
A_k\leq p_1\beta_0^{\beta_0}  \fii \left(\left[ \frac 1{p_0} - \frac 1p \right]^{-1} \right) \sup_{t>0} \left( \sup_{s>t} \left(\frac ts\right)^\frac1p\left[1 + \left(\frac st\right)^\frac1p\right]\right) = 2p_1\beta_0^{\beta_0}  \fii \left(\left[ \frac 1{p_0} - \frac 1p \right]^{-1} \right).
$$

\end{proof}

\begin{remark}
\noindent It is worth mentioning that in order to prove that \eqref{main_theorem:hypothesis_eq} implies \eqref{main_theorem:hypothesis_eq2}, the only properties that we have used of $\varphi$ are that $\fii$ is a nondecreasing function such that $\fii(1) = 1$ and that for every constant $C \geq 1$, $\fii(Cx) \approx \fii(x)$.
\end{remark}

As we mentioned in the introduction, one application of these pointwise estimates is to deduce extensions of Yano's extrapolation results as the following corollary shows. First, for an arbitrary measure $\mu$, some exponent $1 \leq p < \infty$ and some admissible  function $\fii$, we define the function space $L^{p,1}\varphi( \log L)(\mu)$ as the set of $\mu$-measurable functions $f$ satisfying $$\lVert f \rVert_{L^{p,1}\varphi( \log L)(\mu)} = \int_0^\infty \varphi\left(1 + \log^+\frac 1 r\right)f_\mu^*(r)\frac{dr}{r^{1 - \frac 1p}}  < \infty.$$

\begin{corollary}

Let $T$ be a sublinear operator such that for some $1 < p_0 < p_1 < \infty$ and for every $p_0 < p \leq p_1$,
\begin{equation*}
T:L^{p, 1}(\mu)\longrightarrow  L^{p, \infty}(\nu), \qquad C\varphi\left(\left[\frac 1{p_0} - \frac 1p\right]^{-1}\right),
\end{equation*}
where $\varphi$ is an admissible function. Then, if $\nu$ is a finite measure, 
$$
T: L^{p_0,1}\varphi( \log L)(\mu) \rightarrow L^{p_0,\infty}(\nu), \qquad \frac C{p_0 - 1}.
$$

\end{corollary}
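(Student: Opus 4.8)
The plan is to combine the pointwise estimate from Theorem~\ref{thrm:main_result1} with a direct computation of the relevant integral, using that $\nu$ is finite so that the $Q_{p_1}$-term can be absorbed. By Theorem~\ref{thrm:main_result1}, under the stated hypothesis we have, for every $t>0$,
$$
(Tf)^*_\nu(t)\lesssim \frac 1{p_0-1}\,R_{p_0,p_1,\fii}(f^*_\mu)(t)=\frac 1{p_0-1}\bigl(P_{p_0,\fii}(f^*_\mu)(t)+Q_{p_1}(f^*_\mu)(t)\bigr).
$$
Since $\nu$ is finite, membership in $L^{p_0,\infty}(\nu)$ only needs to be checked for $t$ bounded above by $\nu(\mathcal N)$, so after multiplying by $t^{1/p_0}$ it suffices to bound $t^{1/p_0}P_{p_0,\fii}(f^*_\mu)(t)$ and $t^{1/p_0}Q_{p_1}(f^*_\mu)(t)$ uniformly for $0<t\le\nu(\mathcal N)$ by a constant times $\lVert f\rVert_{L^{p_0,1}\varphi(\log L)(\mu)}$.

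First I would handle the $P_{p_0,\fii}$-term. We have
$$
t^{\frac 1{p_0}}P_{p_0,\fii}(f^*_\mu)(t)=\int_0^t \varphi\Bigl(1-\log\frac rt\Bigr)f^*_\mu(r)\,\frac{dr}{r^{1-\frac 1{p_0}}}.
$$
For $r<t\le\nu(\mathcal N)=:C_\nu$, using the monotonicity of $\varphi$ I split $1-\log\frac rt=1+\log\frac tr$ and, by \eqref{por la logconcavidad2} (with the substitution $x=\frac tr\cdot\frac{1}{C_\nu}\cdot C_\nu$, or more simply bounding $\frac tr\le \frac{C_\nu}{r}$ when $r\le t\le C_\nu$), get $\varphi(1+\log\frac tr)\le \varphi(1+\log C_\nu+\log^+\frac 1r)\lesssim_{C_\nu}\varphi(1+\log^+\frac 1r)$, since $\varphi(Cx)\approx\varphi(x)$ for a fixed constant $C\ge 1$ and $1+\log C_\nu+\log^+\frac1r\le (1+\log C_\nu)(1+\log^+\frac1r)$. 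Hence $t^{1/p_0}P_{p_0,\fii}(f^*_\mu)(t)\lesssim\int_0^\infty\varphi(1+\log^+\frac1r)f^*_\mu(r)\frac{dr}{r^{1-1/p_0}}=\lVert f\rVert_{L^{p_0,1}\varphi(\log L)(\mu)}$, uniformly in $0<t\le C_\nu$.

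For the $Q_{p_1}$-term, write
$$
t^{\frac 1{p_0}}Q_{p_1}(f^*_\mu)(t)=t^{\frac 1{p_0}-\frac 1{p_1}}\int_t^\infty f^*_\mu(r)\,\frac{dr}{r^{1-\frac 1{p_1}}},
$$
split the integral at $1$ (or at $C_\nu$), and on $[t,1]$ bound $r^{1/p_1-1}\le r^{1/p_0-1}$ and $\varphi(1+\log^+\frac1r)\ge\varphi(1)=1$, while on $[1,\infty)$ bound $r^{1/p_1-1}\le r^{1/p_0-1}$ again (both $1/p_i-1<0$) and use $\varphi\ge 1$; since $p_0<p_1$ the factor $t^{1/p_0-1/p_1}$ stays bounded for $t\le C_\nu$. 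This again yields a bound by $\lVert f\rVert_{L^{p_0,1}\varphi(\log L)(\mu)}$. Collecting the two estimates gives $\sup_{t>0}t^{1/p_0}(Tf)^*_\nu(t)\lesssim\frac 1{p_0-1}\lVert f\rVert_{L^{p_0,1}\varphi(\log L)(\mu)}$, which is the claim.

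The main obstacle I anticipate is purely bookkeeping: making sure the logarithmic weight $\varphi(1-\log\frac rt)$, which depends on the pair $(r,t)$, is correctly compared with the $t$-independent weight $\varphi(1+\log^+\frac1r)$ appearing in the definition of $L^{p_0,1}\varphi(\log L)(\mu)$. This is where finiteness of $\nu$ is essential — it caps $t$ from above so that $\log\frac tr$ contributes only a bounded additive shift, which $\varphi(Cx)\approx\varphi(x)$ then absorbs. No deeper difficulty is expected; the $p_0-1$ in the denominator is carried along verbatim from Theorem~\ref{thrm:main_result1}.
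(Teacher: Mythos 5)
Your overall plan coincides with the paper's: apply the pointwise estimate of Theorem~\ref{thrm:main_result1} and then show that $R_{p_0,p_1,\fii}$ maps $L^{p_0,1}\fii(\log L)(\mu)$ into $L^{p_0,\infty}$ on $(0,\nu(\mathcal N))$. The paper does the second step by invoking \cite[Theorem 3.3]{cs:cs} for the kernel operator with kernel \eqref{main_theorem:kernel_expresion}, whereas you do a direct pointwise comparison of weights; both routes are legitimate and of comparable length. Your treatment of the $P_{p_0,\fii}$-term is fine.

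However, your handling of the $Q_{p_1}$-term contains an error. You split off $t^{1/p_0-1/p_1}$, bound it by a constant for $t\le\nu(\mathcal N)$, and then claim that on $[t,1]$ one has $r^{1/p_1-1}\le r^{1/p_0-1}$. This inequality is backwards: since $p_1>p_0>1$ we have $\tfrac1{p_1}-1<\tfrac1{p_0}-1<0$, and for $0<r\le1$ the map $a\mapsto r^a$ is decreasing, so in fact $r^{1/p_1-1}\ge r^{1/p_0-1}$ there. Worse, once you have dropped the $t$-factor there is no way to repair this: $r^{1/p_1-1}/\bigl[\varphi(1+\log\tfrac1r)\,r^{1/p_0-1}\bigr]=r^{1/p_1-1/p_0}/\varphi(1+\log\tfrac1r)\to\infty$ as $r\to0^+$, because the numerator grows exponentially in $\log\tfrac1r$ while $\varphi$ grows only polynomially. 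The fix is to keep the two factors together: for $r\ge t$,
$$
t^{\frac1{p_0}-\frac1{p_1}}\,r^{\frac1{p_1}-1}=\Bigl(\frac tr\Bigr)^{\frac1{p_0}-\frac1{p_1}}r^{\frac1{p_0}-1}\le r^{\frac1{p_0}-1},
$$
since $t/r\le1$ and $\tfrac1{p_0}-\tfrac1{p_1}>0$. With $\varphi\ge1$ this gives
$$
t^{\frac1{p_0}}Q_{p_1}(f^*_\mu)(t)\le\int_t^\infty f^*_\mu(r)\,r^{\frac1{p_0}-1}\,dr\le\lVert f\rVert_{L^{p_0,1}\varphi(\log L)(\mu)},
$$
uniformly in $t>0$ — note in particular that no split at $1$ and no finiteness of $\nu$ are needed for this term; finiteness of $\nu$ is used only in the $P_{p_0,\fii}$-estimate, exactly as you identified in your closing paragraph.
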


\begin{proof}

\noindent As a consequence of Theorem~\ref{thrm:main_result1}, 
$$
 (Tf)_\nu^*(t) \lesssim \frac 1{p_0 - 1}R_{p_0, p_1, \fii}(f_\mu^*)(t), \qquad 0<t< \nu(\mathcal N).
$$ Further, by means of \cite[Theorem 3.3]{cs:cs}, 
\begin{align*}
    & \norm{R_{p_0,p_1,\varphi}}_{L^{p_0,1}\fii(\log L)(\mu)\rightarrow L^{p_0,\infty}(0,\nu(\mathcal N)} \lesssim \sup_{0 < t < \nu(\mathcal N)} t^\frac 1{p_0}\left[ \sup_{s>0} \frac{\displaystyle \int_0^s k(t,r) dr}{\displaystyle \int_0^s\varphi\left(1 + \log^+ \frac 1r\right)\frac{dr}{r^{1 - \frac 1{p_0}}}}  \right],    
\end{align*}
with $k(t,r)$ as in \eqref{main_theorem:kernel_expresion}. Hence, if $0 < s \leq t$,  
\begin{align*}
    t^\frac 1{p_0}\left[ \frac{\displaystyle \int_0^s k(t,r) dr}{\displaystyle \int_0^s\varphi\left(1 + \log^+ \frac 1r\right)\frac{dr}{r^{1 - \frac 1{p_0}}}}  \right] &= \frac{\displaystyle \int_0^s\varphi\left(1 + \log \frac tr\right)\frac{dr}{r^{1 - \frac 1{p_0}}}}{\displaystyle \int_0^s\varphi\left(1 + \log^+ \frac 1r\right)\frac{dr}{r^{1 - \frac 1{p_0}}}} \leq\max\left(1,\nu(\mathcal N)^\frac 1{p_0}\right), % \\ &\leq \frac1{s^\frac 1{p_0}}\int_0^s \varphi\left(1 + \log \frac tr\right)\frac{dr}{r^{1 - \frac 1{p_0}}} \\ &= P_{p_0,\fii}\left(\frac st\right) \leq P_{p_0,\fii}(0),   
\end{align*}

\noindent while if $s > t$, we obtain \begin{align*}
    t^\frac 1{p_0}\left[ \frac{\displaystyle \int_0^s k(t,r) dr}{\displaystyle \int_0^s\varphi\left(1 + \log^+ \frac 1r\right)\frac{dr}{r^{1 - \frac 1{p_0}}}}  \right] &= \frac{\displaystyle t^\frac 1{p_0}\left(\frac 1{t^\frac 1{p_0}}\int_0^t\varphi\left(1 + \log \frac tr\right)\frac{dr}{r^{1 - \frac 1{p_0}}} + p_1 \left[\left(\frac s t \right)^\frac 1{p_1} - 1\right]\right)}{\displaystyle \int_0^s\varphi\left(1 + \log^+ \frac 1r\right)\frac{dr}{r^{1 - \frac 1{p_0}}}} \\ &\leq \frac{\displaystyle t^\frac 1{p_0}\left(C_\varphi + p_1 \left(\frac s t \right)^\frac 1{p_1} \right)}{\displaystyle p_0s^\frac 1{p_0}} \leq \frac{C_\varphi + p_1}{p_0},   
\end{align*}

\noindent so that $\norm{ R_{p_0,p_1,\varphi}}_{L^{p_0,1}\fii(\log L)(\mu)\rightarrow L^{p_0,\infty}((0,\nu(\mathcal N)),\, dx)} < \infty.$

\end{proof}

\begin{remark}

\noindent For $p_0 = 1$, we observe that following the lines of the sufficiency of the proof of Theorem~\ref{thrm:main_result1}, the only place where we could have problems is in \eqref{eq:Pg**mu}, since this estimate blows up as $p_0$ approximates $1^+$. Nevertheless, easy computations show that then, for every $t > 0$ and every $\mu$-measurable function $f$, $$(Tf)_\nu^*(t) \lesssim \frac 1t \int_0^t \left(1 - \log \frac rt\right)\varphi\left(1 - \log \frac rt\right)f_\mu^*(r)\,dr + \frac 1{t^\frac 1{p_1}}\int_t^\infty f_\mu^*(r)\frac{dr}{r^{1 - \frac 1{p_1}}}.$$ However, when $\varphi(x) = x^\alpha$, $\alpha > 0$, it can be deduced that for an arbitrary measure $\mu$ and a finite measure $\nu$, $$T:L(\log L)^{\alpha + 1}(\mu) \rightarrow L^{1,\infty}(\nu),$$ which, as we have seen on the introduction, is far from the best results known up to now (see, for instance, Theorem \ref{thrm:restricted_weak_type_Yano}).

\end{remark}

\bigskip

\centerline{\bf OPEN QUESTION}

\medskip

Can we extend our result to the case $p_0=1$ in an optimal way?

\end{document}